\documentclass[preprint,12pt,number,sort&compress]{elsarticle}

\usepackage{amsfonts}
\usepackage{dsfont}
\usepackage{amssymb}
\usepackage{mathrsfs}
\usepackage{graphicx}
\usepackage{multirow}
\usepackage{color}
\usepackage{amsmath}
\usepackage{amsthm}
\usepackage{bm}
\usepackage{url}
\usepackage{ulem}
\usepackage{algorithm}
\usepackage{algorithmicx}
\usepackage{algpseudocode}
\newcommand{\br}{{\mathbb R}}

\newcommand{\bn}{{\mathbb N}}

\newcommand{\argmin}{{\rm argmin}}
\newcommand{\grad}{{\rm grad}}
\newcommand{\Exp}{{\rm Exp}}
\newcommand{\id}{{\rm id}}
\newcommand{\dom}{{\rm dom}}

\begin{document}

\begin{frontmatter}

%% Title, authors and addresses

%% use the tnoteref command within \title for footnotes;
%% use the tnotetext command for theassociated footnote;
%% use the fnref command within \author or \address for footnotes;
%% use the fntext command for theassociated footnote;
%% use the corref command within \author for corresponding author footnotes;
%% use the cortext command for theassociated footnote;
%% use the ead command for the email address,
%% and the form \ead[url] for the home page:
%% \title{Title\tnoteref{label1}}
%% \tnotetext[label1]{}
%% \author{Name\corref{cor1}\fnref{label2}}
%% \ead{email address}
%% \ead[url]{home page}
%% \fntext[label2]{}
%% \cortext[cor1]{}
%% \address{Address\fnref{label3}}
%% \fntext[label3]{}

\title{Convergence analysis of the Riemannian proximal gradient method with inexact oracle}

%% use optional labels to link authors explicitly to addresses:
%% \author[label1,label2]{}
%% \address[label1]{}
%% \address[label2]{}

\author[1]{Chao Huang}
\ead{hchao@szu.edu.cn}
\author[2]{Xiyuan Xie}
\ead{xiexy65@mail2.sysu.edu.cn}
\author[3]{Anna Qi}
\ead{qiann@gpnu.edu.cn}
\author[2]{Lihua Yang}
\ead{mcsylh@mail.sysu.edu.cn}
\author[1]{Qian Zhang \corref{cor1}}
\ead{mazhangq@szu.edu.cn}

\address[1]{School of Mathematical Sciences, Shenzhen University, Shenzhen 518060, China}
\address[2]{School of Mathematics, Sun Yat-sen University, Guangzhou 510275, China}
\address[3]{School of Mathematics and Systems Science, Guangdong Polytechnic
Normal University, Guangzhou 510665, China}
\cortext[cor1]{Corresponding author.}

\begin{abstract}
We extend the notion of an inexact first-order oracle from the Euclidean setting to Riemannian optimization, and conduct a convergence analysis for
the Riemannian proximal gradient method equipped with this oracle, which we refer to as RPG-IO.
Under mild conditions on the oracle errors, we establish the global convergence of RPG-IO.
Specifically, we prove that (i) the norm of the search direction converges to zero;
(ii) the sequence of function values converges to the function value at any accumulation point of the iterates; and (iii) every accumulation point of the iterates is a stationary point.
Under the additional assumption of the Riemannian Kurdyka--{\L}ojasiewicz (KL) property, we
prove that the full sequence of iterates generated by RPG-IO converges to a single stationary
point. Moreover, we derive explicit convergence rates when the KL exponent is specified.
Finally, when a strong inexact oracle is employed, we establish the convergence rate of the sequence of function values to the optimal value.

\end{abstract}

%%Graphical abstract
%\begin{graphicalabstract}
%%\includegraphics{grabs}
%\end{graphicalabstract}

%%Research highlights
%\begin{highlights}
%\item Research highlight 1
%\item Research highlight 2
%\end{highlights}

\begin{keyword}
Riemannian optimization \sep Riemannian proximal gradient \sep inexact first-order oracle \sep Riemannian KL property \sep convergence analysis
\end{keyword}

\end{frontmatter}

\newtheorem{thm}{Theorem}[section]
\newtheorem{lem}[thm]{Lemma}
\newtheorem{de}{Definition}
\newtheorem{assump}{Assumption}
\newtheorem{example}{Example}
\newtheorem{coro}[thm]{Corollary}
\newtheorem{prop}[thm]{Proposition}
\newtheorem{alg}{Algorithm}
\newtheorem{eg}{Example}[section]
\renewcommand\abstractname{\bf Abstract}
\renewcommand\figurename{Figure}
\renewcommand\tablename{Table}
\renewcommand{\proofname}{\bf Proof}

%% \linenumbers

\section{Introduction}
\setcounter{equation}{0}

Riemannian optimization, which aims to minimize an objective function defined on a Riemannian manifold, has attracted considerable attention in recent years
\cite{Absil-bk2009,Boumal-bk2023,Sato-bk2021,Wen-2020}.
In this paper, we consider a class of nonsmooth nonconvex optimization problems on Riemannian manifolds taking the following form:
\begin{equation}\label{eq:problem}
  \min_{x\in \mathcal{M}} F(x):=f(x) + h(x),
\end{equation}
where $\mathcal{M}$ is a finite-dimensional Riemannian manifold,
$f$ is a differentiable function (or more generally, a continuous function equipped with an inexact
first-order oracle), $h$ is continuous but possibly nonsmooth,
and $F$ is bounded below, i.e., there exists some $F^*>-\infty$ such that
$F(x)\ge F^*$ for all $x\in \mathcal{M}$.
This class of problems arises in a wide range of applications, including sparse principal component analysis,
compressed modes in physics, unsupervised feature selection, sparse blind deconvolution, and optimization with nonconvex regularizers;
see, e.g., \cite{Ma-2020} and references therein.

In the Euclidean setting (i.e., when $\mathcal{M}$ reduces to the Euclidean space $\br^n$),
the composite optimization problem \eqref{eq:problem} can be efficiently solved by the proximal
gradient method and its accelerated variants.
Starting from an initial point $x_0$, the method generates its iterates via\footnote{The commonly-used update expression is
$x_{k+1} = \argmin_{x\in\mathbb{R}^n} \langle \nabla f(x_k), x-x_k\rangle_\mathrm{F} + \frac{1}{2t}\|x-x_k\|_\mathrm{F}^2 + h(x)$.
We reformulate it equivalently for the convenience of the Riemannian formulation given later. }
\begin{equation}\label{eq:PGM}
  \begin{cases}
d_k = \argmin_{d\in\mathbb{R}^n} \langle \nabla f(x_k), d\rangle_\mathrm{F} + \frac{1}{2t}\|d\|_\mathrm{F}^2 + h(x_k + d), & \text{(Proximal mapping)} \\
x_{k+1} = x_k + d_k, & \text{(Update iterates)}
\end{cases}
\end{equation}
where $\langle \cdot,\cdot\rangle_\mathrm{F}$ denotes the Frobenius inner product:
$\langle u,v\rangle_\mathrm{F}:=u^\top v$ for vectors $u,v\in\br^n$ and $\langle u,v\rangle_\mathrm{F}:=\mathrm{tr}(u^\top v)$ for matrices $u,v\in\br^{n\times p}$.
The corresponding induced norm $\|u\|_\mathrm{F}:=\sqrt{\langle u,u\rangle_\mathrm{F}}$ is called the Frobenius norm.
Under the assumptions that $f$ is convex and $L$-smooth, $h$ is convex, and $F$ is coercive, the proximal
gradient method achieves an $\mathcal{O}(1/k)$ convergence rate for the objective value \cite{Beck-2009,Beck-bk2017}.

In the presence of manifold constraints, developing Riemannian proximal gradient methods is more challenging due to the nonlinearity of the underlying domain.
In \cite{Ma-2020}, Chen et al. proposed a proximal gradient method on the Stiefel manifold, called ManPG, as follows:
\begin{equation*}
\begin{cases}
\eta_k = \argmin_{\eta \in T_{x_k}\mathcal{M}} \langle \grad f(x_k), \eta \rangle_{\mathrm{F}} + \frac{1}{2t} \|\eta\|_{\mathrm{F}}^2 + h(x_k + \eta), \\
x_{k+1} = R_{x_k}(\alpha_k\eta_k),
\end{cases}
\end{equation*}
where $\grad f(x_k)$ denotes the Riemannian gradient of $f$ at $x_k$,
$T_{x_k}\mathcal{M}$ denotes the tangent space of the manifold $\mathcal{M}$ at $x_k$,
$R_{x_k}:T_{x_k}\mathcal{M}\rightarrow \mathcal{M}$ is the retraction map,
and $\alpha_k$ is the step size selected via a line search procedure.
It is shown that the subproblem at each iteration can be solved efficiently by a semi-smooth Newton method.
Furthermore, the global convergence of ManPG is established.

In \cite{Huang-2022-2}, Huang and Wei generalized the ManPG subproblem by introducing a weighting operator $W$ to replace the squared Frobenius norm
$\|\eta\|_{\mathrm{F}}^2$ with the weighted inner product $\langle \eta, W\eta \rangle_{\mathrm{F}}$.
In \cite{Huang-2022}, the same authors developed a Riemannian proximal gradient method, termed RPG,
which replaces the Euclidean addition $x_k + \eta$ with a retraction $R_{x_k}(\eta)$ and is thus well-defined on general manifolds.
In addition, the Riemannian metric $\langle \cdot,\cdot\rangle_x$ is used instead of the Frobenius inner product
$\langle\cdot,\cdot\rangle_{\text{F}}$, and a stationary point is used instead of a minimizer. More precisely, letting
\begin{equation*}
  \ell_{x_k}(\eta) := \langle \grad f(x_k), \eta \rangle_{x_k} + \frac{1}{2 t_k} \|\eta\|^2_{x_k} + h(R_{x_k}(\eta)),
\end{equation*}
the authors find $\eta_{x_k}^*\in T_{x_k}\mathcal{M}$ such that
\begin{equation*}
\eta_{x_k}^*\ \text{is a stationary point of}\ \ell_{x_k}(\eta)\
\text{in}\ T_{x_k}\mathcal{M}\ \text{and}\ \ell_{x_k}(\eta_{x_k}^*)\le \ell_{x_k}(0).
\end{equation*}
In contrast to ManPG, which only guarantees global convergence,
the local convergence rate of RPG has also been established under the Riemannian Kurdyka--{\L}ojasiewicz (KL) property.

In \cite{Choi-2025}, Choi et al. established the linear convergence rate of RPG and ManPG under the strong retraction convexity condition.
In \cite{Beck-2023}, Beck and Rosset addressed problem \eqref{eq:problem} over a compact embedded submanifold,
and proposed a dynamic smoothing gradient descent on manifolds (DSGM) algorithm
based on Riemannian gradient steps applied to a sequence of smooth approximations of the objective function.
In \cite{Bai-2023}, Bai and Bartoli developed a proxy step-size technique for proximal gradient on the sphere manifold by exploiting
the convexity and absolute homogeneity of $h(x)$,
which is easy to implement and much faster than the semi-smooth Newton method.

The standard convergence analysis of gradient-based methods relies on the availability of exact gradient information of
the objective function. However, in many optimization problems, one does not have
access to exact gradients, e.g., the gradient is obtained by solving an auxiliary optimization problem.
In this case, one can use inexact (approximate) gradient information.
Optimization algorithms with inexact first-order oracles are well studied in the literature, see, e.g., \cite{Nesterov-2014,Nabou-2025}.
In the Euclidean setting, it is well known that a convex and $L$-smooth function $f$ satisfies
\begin{equation}\label{eq:L-smooth}
  0 \leq f(x) - \left(f(y) + \left\langle \nabla f(y), x - y \right\rangle_\mathrm{F}\right) \leq \frac{L}{2}\|x - y\|_\mathrm{F}^2, \quad \forall\, x,y\in\br^n.
\end{equation}
In \cite{Nesterov-2014}, Devolder et al. considered the case where $h$ is the
indicator function of a convex set $Q$ and $f$ is a convex function, and proposed the definition of the
so-called inexact first-order $(\delta,L)$-oracle for $f$ by introducing a given amount $\delta$ of tolerance into inequality \eqref{eq:L-smooth}, that is, for any $y \in Q$, one can compute
an inexact oracle consisting of a pair $(f_{\delta,L}(y), g_{\delta,L}(y))$ such that
\begin{equation}\label{}
0 \leq f(x) - \left(f_{\delta,L}(y) + \left\langle g_{\delta,L}(y), x - y \right\rangle_\mathrm{F}\right) \leq \frac{L}{2}\|x - y\|_\mathrm{F}^2 + \delta, \quad \forall\, x \in Q.
\end{equation}
Recently, Nabou et al. \cite{Nabou-2025} introduced a definition of inexactness for a first-order oracle for
$f$ involving a degree $q\in[0,2)$, that is,
\begin{equation}\label{eq:Nabou-inexact-oracle}
 f(x) - \left(f(y) + \left\langle g_{\delta,L}(y), x - y \right\rangle_\mathrm{F}\right)
 \leq \frac{L}{2}\|x - y\|_\mathrm{F}^2 + \delta\|x - y\|_\mathrm{F}^q, \quad \forall x\, \in \dom(f).
\end{equation}

Riemannian optimization with inexact first-order oracles has also been well studied in the literature.
In \cite{Huang-2023}, Huang and Wei proposed an inexact Riemannian proximal gradient method
named IRPG, which avoids the need to exactly solve the RPG subproblem, and established its global convergence and local convergence rates.
In \cite{Deng-2023}, Deng and Peng developed an inexact augmented Lagrangian framework on manifolds for solving problem \eqref{eq:problem}.
By leveraging the Moreau envelope, the authors obtained a smoothed Riemannian minimization subproblem, which is solved via a Riemannian Barzilai--Borwein gradient method.
In \cite{Deng-2025-2}, Deng et al. proposed two inexact augmented Lagrangian methods on manifolds, namely ManIAL and StoManIAL, with provable oracle complexity guarantees.
In \cite{Deng-2025}, Zhou et al. developed a unified algorithmic framework for inexact gradient methods on Riemannian manifolds,
and established strong convergence results under two types of inexactness conditions and the Riemannian KL property.

In this paper, we extend the notion of an inexact first-order oracle from the Euclidean setting to Riemannian optimization, and conduct a rigorous convergence analysis of
the Riemannian proximal gradient method equipped with such an inexact oracle (RPG-IO).
The main contributions of this work are summarized as follows:
\begin{itemize}
  \item We define an inexact first-order oracle tailored to Riemannian optimization,
  and present representative examples as well as a fundamental property of the proposed oracle framework.
  \item RPG-IO is not a descent method due to oracle errors. Nevertheless, we establish its global convergence:
the norm of the search direction converges to zero;
the sequence of function values converges to the function value at any accumulation point of the iterates;
and every accumulation point of the iterates is a stationary point.
  \item Under the additional assumption of the Riemannian KL property, we
prove that the full sequence of iterates generated by RPG-IO converges to a single stationary
point. Moreover, we derive explicit convergence rates when the KL exponent is specified.
To the best of our knowledge, the convergence rate results derived in this work differ not only from classical Euclidean results (e.g., \cite{Bolte-2009,Li-2023})
but also from existing results in Riemannian optimization (e.g., \cite{Huang-2022,Huang-2023,Deng-2025}).
\item When a strong inexact oracle is employed, we further establish the convergence rate of the sequence of function values to the optimal value.
\end{itemize}

The remainder of this paper is organized as follows.
Notation and preliminaries on Riemannian manifolds are presented in Section 2.
In Section 3, we define an inexact first-order oracle for Riemannian optimization,
together with illustrative examples and a key property of the proposed oracle.
In Section 4, we analyze the convergence of the Riemannian proximal gradient method equipped with this oracle,
covering global convergence, as well as sequential convergence and convergence rates under the Riemannian KL property.
In Section 5, we define a strong inexact oracle and establish the
convergence rate of the sequence of function values to the optimal value.
Finally, Section 6 concludes the paper with a brief summary.

\section{Notation and preliminaries on Riemannian manifolds}
\setcounter{equation}{0}

We follow standard references for the Riemannian concepts used in this paper; see, e.g., \cite{Absil-bk2009,Boumal-bk2023,Lee-book2018,Carmo-book1992}.
A Riemannian manifold $\mathcal{M}$ is a manifold
endowed with a Riemannian metric $(\xi_x, \eta_x) \mapsto \langle \xi_x, \eta_x \rangle_x \in \mathbb{R}$, where $\xi_x$ and $\eta_x$ are
tangent vectors in the tangent space at $x$. The induced norm on the tangent space
at $x$ is denoted by $\| \cdot \|_x$ or $\| \cdot \|$ when the subscript is clear from context. The
tangent space to the manifold $\mathcal{M}$ at $x$ is denoted by $T_x \mathcal{M}$, and the tangent bundle,
which is the set of all tangent vectors, is denoted by $T \mathcal{M}$. A vector field is a function
from the manifold to its tangent bundle, i.e., $\eta: \mathcal{M} \to T \mathcal{M}: x \mapsto \eta_x$.

The Riemannian gradient of a function $f: \mathcal{M} \to \br$, denoted by $\grad f(x)$, is the unique tangent vector satisfying
\begin{equation*}
Df(x)[\eta] = \langle \grad f(x), \eta\rangle_x, \quad \forall\, \eta \in T_x \mathcal{M},
\end{equation*}
where $Df(x)[\eta]$ denotes the directional derivative of $f$ at $x \in \mathcal{M}$ in the direction $\eta\in T_x \mathcal{M}$.

A smooth map $R: T\mathcal{M} \to \mathcal{M}$ is called a retraction on a manifold $\mathcal{M}$ if its restriction to $T_x\mathcal{M}$,
denoted by $R_x: T_x\mathcal{M} \to \mathcal{M}$, satisfies
\begin{itemize}
    \item $R_x(0_x) = x$ for all $x \in \mathcal{M}$, where $0_x$ denotes the zero element of $T_x\mathcal{M}$;
    \item The differential of $R_x$ at $0_x$ is the identity map on $T_x\mathcal{M}$, i.e., $DR_x(0_x) = \mathrm{id}_{T_x\mathcal{M}}$.
\end{itemize}
The most natural retraction is the exponential map, denoted by $\text{Exp}$, which satisfies $\mathrm{Exp}_x(\eta_x) = \gamma(1)$, where $\gamma$ is the geodesic
with $\gamma(0) = x$ and $\gamma'(0) = \eta_x$.
Unfortunately, the exponential map is
itself defined as the solution to a nonlinear ordinary differential equation, which generally
poses significant numerical challenges for efficient computation. Therefore, we consider
alternative retractions that can be computed more efficiently
\cite{Absil-bk2009,Boumal-bk2023,Wen-2020}.

A smooth map $T\mathcal{M} \times T\mathcal{M} \to T\mathcal{M}: (\eta_x, \xi_x) \mapsto \mathcal{T}_{\eta_x}(\xi_x)$
is called a vector transport associated with
a retraction $R$ on a manifold $\mathcal{M}$ if it satisfies the following conditions:
\begin{itemize}
    \item $\mathcal{T}_{\eta_x}: T_x\mathcal{M} \rightarrow T_{R_x(\eta_x)}\mathcal{M}$ is a linear map;
    \item $\mathcal{T}_{0_x}\xi_x = \xi_x$ for all $\xi_x \in T_x\mathcal{M}$.
\end{itemize}
The adjoint operator $\mathcal{T}^\sharp$ of a vector transport $\mathcal{T}$ is defined by the relation $\langle \xi_y, \mathcal{T}_{\eta_x}\zeta_x \rangle_y = \langle \mathcal{T}_{\eta_x}^\sharp \xi_y, \zeta_x \rangle_x$ for all $\eta_x, \zeta_x \in T_x\mathcal{M}$ and $\xi_y \in T_y\mathcal{M}$, where $y = R_x(\eta_x)$.
An important vector transport is parallel transport.
We refer the reader to \cite{Absil-bk2009,Boumal-bk2023,Lee-book2018,Carmo-book1992} for its rigorous definition.
Parallel transport is not the only way to define a vector transport.
In fact, there is considerable flexibility in how a vector transport is chosen for a given problem.
In this paper, we employ a particularly tractable class of vector transports, namely, the vector transport by differentiated retraction, given by $\mathcal{T}_{\eta_x}:=DR_x(\eta_x)$ \cite{Absil-bk2009}.

The generalized subdifferential on Euclidean spaces is defined in \cite{Clarke-book1990}.
Recall that if $G$ is a locally Lipschitz function from a Hilbert space $X$ to $\mathbb{R}$, the Clarke generalized directional derivative of $G$ at $x \in X$ in the direction $v \in X$, denoted by $G^\circ(x;v)$, is defined by
$$
G^\circ(x;v) = \limsup_{y \to x,\ t \downarrow 0} \frac{G(y + tv) - G(y)}{t},
$$
and the Clarke generalized subdifferential of $G$ at $x$, denoted by $\partial G(x)$, is defined by
$$
\partial G(x) := \{\xi \in X : \langle \xi, v \rangle \leq G^\circ(x;v) \text{ for all } v \in X\}.
$$

The Riemannian version of the generalized subdifferential is introduced in \cite{Hosseini-2018}.
Let $f : \mathcal{M} \to \mathbb{R}$ be a locally Lipschitz function defined on a Riemannian manifold $\mathcal{M}$. For $x \in \mathcal{M}$, let $\hat{f}_x = f \circ R_x$ denote the restriction of the pullback $\hat{f} = f \circ R$ to $T_x \mathcal{M}$. The Riemannian version of the Clarke generalized directional derivative of $f$ at $x$ in the direction $\eta \in T_x \mathcal{M}$, denoted by $f^\circ(x;\eta)$, is defined as $f^\circ(x;\eta) := \hat{f}_x^\circ(0_x;\eta)$, where $\hat{f}_x^\circ(0_x;\eta)$ denotes the Clarke generalized directional derivative of $\hat{f}_x : T_x \mathcal{M} \to \mathbb{R}$ at $0_x$ in the direction $\eta \in T_x \mathcal{M}$. Correspondingly, the Riemannian subdifferential of $f$ at $x$, denoted by $\partial f(x)$, is defined as $\partial f(x) := \partial \hat{f}_x(0_x)$.
Any element of $\partial f(x)$ is called a Riemannian subgradient of $f$ at $x$.
Note that other equivalent definitions of the Clarke generalized directional derivative and generalized subdifferential exist for functions on Riemannian manifolds \cite{Hosseini-2011}. A point $x$ is called a stationary point of $f$ if $0 \in \partial f(x)$. A necessary condition for $f$ to attain a local minimum at $x$ is that $x$ is a stationary point of $f$ \cite{Hosseini-2011,Yang-2014}.

The injectivity radius of a Riemannian manifold $\mathcal{M}$ is defined as $\mathrm{inj}(\mathcal{M}):=\inf_{x\in \mathcal{M}}\mathrm{inj}(x)$,
where $\mathrm{inj}(x):=\sup\{\varepsilon>0:\Exp_x|_{B(0_x,\,\varepsilon)}\ \text{is a diffeomorphism}\}$.
Note that any compact Riemannian manifold has positive injectivity radius \cite[Lemma 6.16]{Lee-book2018}.
Suppose that $\mathcal{M}$ has positive injectivity radius.
A vector field $\eta$ is called Lipschitz continuous if there exists a constant $\tilde{L}_f > 0$ such that for any $x,y\in \mathcal{M}$ with $\mathrm{dist}(x,y)<\mathrm{inj}(\mathcal{M})$, there holds \cite{Huang-2022}
\begin{equation}\label{de:Lipschitz}
  \|\eta_x - \mathrm{P}_{y\to x} \eta_y\| \le \tilde{L}_f\,\mathrm{dist}(x,y),
\end{equation}
where $\mathrm{P}_{y\to x}:T_y\mathcal{M}\to T_x\mathcal{M}$ denotes the parallel transport along the unique minimizing geodesic joining $y$ to $x$.
A vector field $\eta$ is called locally Lipschitz continuous if for any compact subset $\bar{\Omega}$ of $\mathcal{M}$,
there exists a constant $\tilde{L}_f > 0$ such that for any $x,y\in \bar{\Omega}$ with $\mathrm{dist}(x,y)<\mathrm{inj}(\bar{\Omega})$,
inequality \eqref{de:Lipschitz} holds.
A function on $\mathcal{M}$ is called (locally) Lipschitz continuously differentiable if the vector field of its Riemannian gradient
is (locally) Lipschitz continuous.

\section{ Inexact first-order oracle for Riemannian optimization}
\setcounter{equation}{0}

The following concept of $L$-retraction-smoothness is the generalization of
$L$-smoothness from Euclidean spaces to the Riemannian manifold setting.
Boumal et al. \cite{Boumal-2018} first adopted this concept to analyze the convergence of optimization algorithms on Riemannian manifolds.
Recently, Huang and Wei \cite{Huang-2022} employed this concept to investigate the Riemannian proximal gradient method.

\begin{de}\label{de-L-retraction-smooth}
\cite{Boumal-2018,Huang-2022} A function $f:\mathcal{M}\rightarrow \br$ is called
$L_f$-retraction-smooth with respect to a retraction $R$ if for any $x\in \mathcal{M}$
and any $\eta\in T_x\mathcal{M}$, there holds
\begin{equation}\label{eq:retraction-smooth}
  f(R_x(\eta))\le f(x) + \langle \grad f(x),\eta\rangle + \frac{L_f}{2}\|\eta\|^2.
\end{equation}
\end{de}

The $(\delta,L)$-type inexact first-order oracle was originally formalized by Devolder et al. \cite{Nesterov-2014} in Euclidean spaces.
Recently, Nabou et al. \cite{Nabou-2025} generalized this definition by introducing an error term of order $q$, as formally defined in \eqref{eq:Nabou-inexact-oracle}. By combining this generalized inexact oracle framework with the notion of $L$-retraction-smoothness on Riemannian manifolds, we define an inexact first-order oracle for Riemannian optimization.

\begin{de}\label{de-inexact-oracle}
A function $f:\mathcal{M}\rightarrow \br$ is said to be equipped with an inexact first-order $(\delta,L)$-oracle of degree $q\in[0,2)$
if for any $x\in \mathcal{M}$, there exists $g_{\delta,L,q}(x)\in T_x\mathcal{M}$ such that
\begin{equation}\label{eq:inexact-oracle}
  f(R_x(\eta))\le f(x) + \langle g_{\delta,L,q}(x),\eta\rangle + \frac{L}{2}\|\eta\|^2 + \delta\|\eta\|^q,~~~~\forall\,\eta\in T_x\mathcal{M}.
\end{equation}
\end{de}

Next, we present several illustrative examples that satisfy Definition \ref{de-inexact-oracle}.

\noindent\textbf{Example 1.} (Smooth function with an approximate Riemannian gradient)
Let $f:\mathcal{M}\rightarrow \br$ be $L_f$-retraction-smooth.
Assume that for any $x\in \mathcal{M}$, there exists $g_{\epsilon,L_f}(x)\in T_x\mathcal{M}$ satisfying
\begin{equation*}
  \|g_{\epsilon,L_f}(x)-\grad f(x)\| \le \epsilon.
\end{equation*}
Then $f$ is equipped with a $(\delta,L)$-oracle of degree $q=1$ in the sense of Definition \ref{de-inexact-oracle},
with $g_{\delta,L,1}(x)=g_{\epsilon,L_f}(x)$, $\delta = \epsilon$, and $L=L_f$. To verify this claim, note that
$$
\begin{aligned}
f(R_x(\eta))
&\leq f(x) + \langle \grad f(x), \eta \rangle + \frac{L_f}{2}\|\eta\|^2 \\
&= f(x) + \langle g_{\epsilon,L_f}(x), \eta \rangle + \langle \grad f(x) - g_{\epsilon,L_f}(x), \eta \rangle + \frac{L_f}{2}\|\eta\|^2 \\
&\leq f(x) + \langle g_{\epsilon,L_f}(x), \eta \rangle + \frac{L_f}{2}\|\eta\|^2 + \epsilon\|\eta\|.
\end{aligned}
$$

\noindent\textbf{Example 2.} (Gradient evaluation at nearby points)
Assume that $\mathcal{M}$ has positive injectivity radius.
Suppose that $f:\mathcal{M}\rightarrow \br$ is $L_f$-retraction-smooth and Lipschitz continuously differentiable.
Thus, there exists a constant $\tilde{L}_f>0$ such that for any $x,y\in \mathcal{M}$
with $\mathrm{dist}(x,y)<\mathrm{inj}(\mathcal{M})$, there holds
\begin{equation*}
  \|\grad f(x) - \mathrm{P}_{y\to x} \grad f(y)\| \le \tilde{L}_f\,\mathrm{dist}(x,y),
\end{equation*}
where $\mathrm{P}_{y\to x}:T_y\mathcal{M}\to T_x\mathcal{M}$ denotes the parallel transport along the unique minimizing geodesic joining $y$ to $x$.
For any $x\in \mathcal{M}$, we assume that we have access to the exact Riemannian gradient
only at a nearby point $\hat{x}\neq x$ that satisfies $\mathrm{dist}(x,\hat{x})\le \Delta < \mathrm{inj}(\mathcal{M})$.
Then $f$ is equipped with a $(\delta,L)$-oracle of degree $q=1$ in the sense of Definition \ref{de-inexact-oracle},
with $g_{\delta,L,1}(x)=\mathrm{P}_{\hat{x}\to x}\grad f(\hat{x})$, $\delta = \tilde{L}_f\Delta$, and $L=L_f$. To verify this claim, for any $\eta\in T_x\mathcal{M}$, we have
$$
\begin{aligned}
f(R_x(\eta))
&\leq f(x) + \langle \grad f(x), \eta \rangle + \frac{L_f}{2}\|\eta\|^2 \\
&= f(x) + \langle \mathrm{P}_{\hat{x}\to x}\grad f(\hat{x}), \eta \rangle + \langle \grad f(x) - \mathrm{P}_{\hat{x}\to x}\grad f(\hat{x}), \eta \rangle + \frac{L_f}{2}\|\eta\|^2 \\
&\leq f(x) + \langle \mathrm{P}_{\hat{x}\to x}\grad f(\hat{x}), \eta \rangle + \frac{L_f}{2}\|\eta\|^2 + \tilde{L}_f\Delta\|\eta\|.
\end{aligned}
$$

As established in \cite{Nabou-2025}, for any $\rho>0$, there holds
\begin{equation}\label{eq:Nabou}
\delta \|\eta\|^q \leq \frac{q\rho}{2} \|\eta\|^2 + \frac{2-q}{2\rho^{\frac{q}{2-q}}} \delta^{\frac{2}{2-q}}.
\end{equation}
Consequently, by Definition \ref{de-inexact-oracle}, if a function $f:\mathcal{M}\rightarrow \br$ admits an inexact first-order $(\delta,L)$-oracle of degree $q\in[0,2)$,
there holds
\begin{equation}\label{eq:inexact-inequality}
f(R_x(\eta)) \leq f(x) + \langle g_{\delta,L,q}(x), \eta \rangle + \frac{L + q\rho}{2} \|\eta\|^2 + \frac{2-q}{2\rho^{\frac{q}{2-q}}} \delta^{\frac{2}{2-q}}.
\end{equation}

To study the convergence of optimization algorithms on Riemannian manifolds, we introduce the following notion of
convexity on manifolds.

\begin{de}\label{de-retraction-convex}
\cite{Huang-2022} A function $f:\mathcal{M}\rightarrow \br$ is called
retraction-convex with respect to a retraction $R$ if for any $x\in \mathcal{M}$,
any $\xi,\eta\in T_x\mathcal{M}$, and any $\zeta\in \partial(f\circ R_x)(\xi)$, there holds
\begin{equation}\label{eq:retraction-convex}
  f(R_x(\eta)) \ge f(R_x(\xi)) + \langle \zeta, \eta-\xi\rangle.
\end{equation}
\end{de}

In Definition \ref{de-inexact-oracle}, $g_{\delta,L,q}(x)$ can be regarded as an approximation to the exact Riemannian gradient (or Riemannian subgradient) of
$f$. The following property provides an upper bound on the magnitude of such approximation errors.

\begin{prop}\label{prop}
Let a retraction-convex function $f:\mathcal{M}\rightarrow \br$ admit a $(\delta,L)$-oracle of degree $q\in[0,2)$.
Then for any $x\in \mathcal{M}$ and any Riemannian subgradient $g(x)\in \partial f(x)$, the following statements hold:
\begin{description}
  \item[(1)] If $q\in [0,1)$, then
  \begin{equation}\label{}
    \|g(x)-g_{\delta,L,q}(x)\| \le (2-q)\left(\frac{L}{2(1-q)}\right)^{\frac{1-q}{2-q}}\delta^{\frac{1}{2-q}}.
  \end{equation}
  \item[(2)] If $q=1$, then
  \begin{equation}\label{}
    \|g(x)-g_{\delta,L,q}(x)\| \le \delta.
  \end{equation}
  \item[(3)] If $q\in(1,2)$, then $f$ is differentiable,
  and $g_{\delta,L,q}(x) = \grad f(x)$.
\end{description}
\end{prop}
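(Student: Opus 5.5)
The plan is to sandwich $f(R_x(\eta))$ between a lower bound from retraction-convexity and the upper bound \eqref{eq:inexact-oracle}, and then optimize over the direction and the length of $\eta$.

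First, fix $x\in\mathcal{M}$ and $g(x)\in\partial f(x)=\partial(f\circ R_x)(0_x)$. Apply Definition \ref{de-retraction-convex} with $\xi=0_x$ and $\zeta=g(x)$. Since $R_x(0_x)=x$, this gives
$f(R_x(\eta))\ge f(x)+\langle g(x),\eta\rangle$ for all $\eta\in T_x\mathcal{M}$.
Combining with \eqref{eq:inexact-oracle} yields the key inequality
\begin{equation*}
\langle g(x)-g_{\delta,L,q}(x),\eta\rangle \le \frac{L}{2}\|\eta\|^2+\delta\|\eta\|^q,\qquad \forall\,\eta\in T_x\mathcal{M}.
\end{equation*}
Set $v:=g(x)-g_{\delta,L,q}(x)$. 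If $v=0$ there is nothing to prove. Otherwise, take $\eta=s\,v/\|v\|$ with $s>0$ to get
\begin{equation*}
\|v\|\le \frac{L}{2}s+\delta s^{q-1}\qquad\text{for all } s>0.
\end{equation*}
Everything then reduces to minimizing $\phi(s)=\frac{L}{2}s+\delta s^{q-1}$ over $s>0$.

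\textbf{Case $q=1$.} Here $\phi(s)=\frac{L}{2}s+\delta$. Letting $s\downarrow 0$ gives $\|v\|\le\delta$, which is (2).

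\textbf{Case $q\in(1,2)$.} Here $s^{q-1}\to 0$ as $s\downarrow0$, so $\|v\|\le 0$ and hence $g(x)=g_{\delta,L,q}(x)$ for every subgradient. Thus $\partial f(x)$ is the singleton $\{g_{\delta,L,q}(x)\}$. To conclude differentiability, I would pull back to the convex function $\hat f_x=f\circ R_x$ on $T_x\mathcal{M}$ and show it is differentiable at $0_x$. Convexity of $\hat f_x$ along lines through $0_x$ follows from retraction-convexity: subgradients exist along the segment, and the condition holds at any base point $\xi$, which gives convexity. A convex (hence locally Lipschitz) function whose Clarke subdifferential is a singleton is differentiable there. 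Alternatively, the sandwich
\begin{equation*}
0\le \hat f_x(\eta)-f(x)-\langle g_{\delta,L,q}(x),\eta\rangle\le \frac{L}{2}\|\eta\|^2+\delta\|\eta\|^q=o(\|\eta\|)
\end{equation*}
gives Fréchet differentiability directly, with derivative $g_{\delta,L,q}(x)$; this equals $\grad f(x)$ because $DR_x(0_x)=\mathrm{id}$. I would use this direct argument, since it is cleaner.

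\textbf{Case $q\in[0,1)$.} Now $\phi$ blows up as $s\to0$ and as $s\to\infty$, so it has an interior minimizer. Setting $\phi'(s)=\frac{L}{2}-(1-q)\delta s^{q-2}=0$ gives
\begin{equation*}
s^*=\left(\frac{2(1-q)\delta}{L}\right)^{\frac{1}{2-q}}.
\end{equation*}
At $s^*$ we have $\delta (s^*)^{q-1}=\frac{L}{2(1-q)}s^*$, so
\begin{equation*}
\phi(s^*)=\frac{L}{2}s^*\cdot\frac{2-q}{1-q}.
\end{equation*}
Substituting $s^*$ and simplifying the exponents, I expect this to equal $(2-q)\bigl(\frac{L}{2(1-q)}\bigr)^{\frac{1-q}{2-q}}\delta^{\frac{1}{2-q}}$. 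The exponent bookkeeping is the only mildly delicate step, and it should close. For $q=0$ one checks it gives $2\sqrt{L\delta/2}=\sqrt{2L\delta}$, as expected.

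The main obstacle I anticipate is justifying, in case (3), the identification with $\grad f(x)$ and the exact sense in which ``differentiable'' is meant. This is handled by the sandwich bound on the pullback.
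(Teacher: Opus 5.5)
Your proof is correct and follows the paper's argument: bound $f(R_x(\eta))$ below by retraction-convexity and above by the oracle inequality, test with $\eta$ along $g(x)-g_{\delta,L,q}(x)$, and optimize over the step length. The differences are minor. You minimize $\frac{L}{2}s+\delta s^{q-1}$ directly, whereas the paper first splits $\delta\|\eta\|^q$ via \eqref{eq:Nabou} and then takes the infimum of \eqref{eq:prop} over $\rho$; both give the same constants, yours with less bookkeeping. In case (3) you also justify differentiability through the $o(\|\eta\|)$ bound on $f\circ R_x$, a step the paper leaves implicit.
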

\begin{proof}
By a standard result in \cite{Hosseini-2011} and the definition of a retraction,
we have
\begin{equation*}
\partial (f \circ R_x)(0) = [D R_x(0)]^\sharp \partial f(R_{x}(0)) = \mathrm{id}_{T_x\mathcal{M}}^\sharp \partial f(x) = \partial f(x).
\end{equation*}
Since $f$ is retraction-convex with respect to the retraction $R$ and $g(x)\in \partial f(x)$, it follows that
\begin{equation*}
f(R_x(\eta)) \geq f(x) + \langle g(x),\eta\rangle.
\end{equation*}
Combining this with inequality \eqref{eq:inexact-inequality}, we obtain
\begin{equation*}
 \langle g(x) - g_{\delta,L,q}(x), \eta \rangle \leq   \frac{L + q\rho}{2} \|\eta\|^2 + \frac{2-q}{2\rho^{\frac{q}{2-q}}} \delta^{\frac{2}{2-q}},
 ~~~~\forall\,\eta\in T_x\mathcal{M}.
\end{equation*}
Taking $\eta = t\frac{g(x) - g_{\delta,L,q}(x)}{\|g(x) - g_{\delta,L,q}(x)\|}$ with $t>0$ in the above inequality, we get
\begin{equation*}
 t\| g(x) - g_{\delta,L,q}(x) \| \leq   \frac{L + q\rho}{2} t^2 + \frac{2-q}{2\rho^{\frac{q}{2-q}}} \delta^{\frac{2}{2-q}}.
\end{equation*}
Dividing both sides by $t>0$ yields
\begin{equation*}
 \| g(x) - g_{\delta,L,q}(x) \| \leq   \frac{L + q\rho}{2} t + \frac{1}{t}\cdot\frac{2-q}{2\rho^{\frac{q}{2-q}}} \delta^{\frac{2}{2-q}}.
\end{equation*}
This upper bound attains its minimum $\sqrt{\frac{(2-q)(L+q\rho)}{\rho^{\frac{q}{2-q}}}}\,\delta^{\frac{1}{2-q}}$ when
$t=\sqrt{\frac{2-q}{\rho^{\frac{q}{2-q}}(L+q\rho)}}\,\delta^{\frac{1}{2-q}}$.
Thus, for any $\rho>0$, we have
\begin{equation}\label{eq:prop}
  \|g(x)-g_{\delta,L,q}(x)\| \le \sqrt{\frac{(2-q)(L+q\rho)}{\rho^{\frac{q}{2-q}}}}\,\delta^{\frac{1}{2-q}}.
\end{equation}
Finally, taking the infimum of the right-hand side over $\rho>0$, we complete the proof.
\end{proof}

Proposition \ref{prop} requires $f$
to be retraction-convex. Nevertheless, the conclusions of Proposition \ref{prop} may still hold even without the retraction-convex condition,
as shown in Examples 1 and 2 above. In light of this observation, we now introduce the following assumption.

\begin{assump}\label{assump:bound}
Suppose that $f$ admits a $(\delta_k,L_k)$-oracle of degree $q\in[0,2)$ for all $k\ge 0$, that $\{L_k\}$ is uniformly bounded above,
and that there exists a constant $C_{q}\ge0$ depending only on $q$ such that for all $k\ge0$, any $x\in \mathcal{M}$,
and any Riemannian subgradient $g(x)\in \partial f(x)$, we have
\begin{equation}\label{}
  \|g(x)-g_{\delta_k,L_k,q}(x)\| \le C_{q}\delta_k^{\frac{1}{2-q}}.
\end{equation}
\end{assump}

\section{Convergence analysis of the Riemannian proximal gradient method with inexact oracle}
\setcounter{equation}{0}

The Riemannian proximal gradient method with inexact oracle (RPG-IO) proposed in this paper is presented in Algorithm \ref{alg:algorithm-IRPG}.
In each iteration, the algorithm first computes a search direction by solving
a proximal subproblem in the tangent space at the current iterate, and then obtains a new iterate via retraction.
Unlike the RPG algorithm proposed by Huang and Wei in \cite{Huang-2023},
our algorithm uses the approximate Riemannian gradient $g_{\delta_k, L_k, q}(x_k)$
instead of the exact Riemannian gradient $\grad f(x_k)$ at each iteration.

\begin{algorithm}[htb]
\caption{ Riemannian proximal gradient method with inexact oracle (RPG-IO)}
\label{alg:algorithm-IRPG}
\begin{algorithmic}[1]
\Require An initial point $x_0\in\mathcal{M}$; the degree parameter $q\in[0,2)$
\For{$k=0,1,\cdots$}
%\While{$E(x^{k-1})-E(x^{k})>\epsilon$}
    \State  Choose $\delta_k\ge0$, $L_k\ge0$, and $t_k>0$; then obtain $g_{\delta_k, L_k, q}(x_k)$.
    \State  Let $\ell_{x_k}(\eta) = \langle g_{\delta_k, L_k, q}(x_k), \eta \rangle + \frac{1}{2 t_k} \|\eta\|^2 + h(R_{x_k}(\eta))$.
    \State  Find $\eta_{x_k}^*\in T_{x_k}\mathcal{M}$ such that
            \begin{equation*}
              \eta_{x_k}^*\ \text{is a stationary point of}\ \ell_{x_k}(\eta)\
              \text{in}\ T_{x_k}\mathcal{M},\ \text{and}\ \ell_{x_k}(\eta_{x_k}^*)\le \ell_{x_k}(0).
            \end{equation*}
    \State  Set $x_{k+1} = R_{x_k}(\eta_{x_k}^*)$.
    \EndFor
\end{algorithmic}
\end{algorithm}

\subsection{Global convergence analysis}

We first derive a recurrence for the sequence $\{F(x_k)\}$,
which serves as the basis for our subsequent analysis.

\begin{thm}\label{thm:IRPG-decrease}
Suppose that for each iteration $k\ge0$, the function $f$ admits a $(\delta_k, L_k)$-oracle of degree $q \in [0, 2)$ with $\delta_k \geq 0$ and $L_k \geq 0$.
Let the sequence $\{x_k\}$ be generated by Algorithm \ref{alg:algorithm-IRPG}.
\begin{description}
  \item[(1)] If there exists a constant $\rho>0$ such that $t_k \leq \frac{1}{2(L_k + q\rho)}$ for all $k\ge0$, then we have
\begin{equation}\label{eq:decrease-1}
F(x_{k+1}) \leq F(x_k) - \frac{1}{4t_k} \|\eta_{x_k}^*\|^2  + \frac{2-q}{2\rho^{\frac{q}{2-q}}} \delta_k^{\frac{2}{2-q}}.
\end{equation}
  \item[(2)] If there exists a constant $\rho>0$ such that $t_k \leq \frac{1}{L_k + q\rho}$ for all $k\ge0$, and the function $h$ is retraction-convex,
  then we have
\begin{equation}\label{eq:decrease-2}
  F(x_{k+1}) \leq F(x_k) - \frac{1}{2t_k} \|\eta_{x_k}^*\|^2  + \frac{2-q}{2\rho^{\frac{q}{2-q}}} \delta_k^{\frac{2}{2-q}}.
\end{equation}
\end{description}
\end{thm}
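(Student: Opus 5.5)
The starting point is inequality \eqref{eq:inexact-inequality} at $x=x_k$, $\eta=\eta_{x_k}^*$ with oracle $(\delta_k,L_k)$. Adding $h(x_{k+1})=h(R_{x_k}(\eta_{x_k}^*))$ to both sides gives
\begin{equation*}
F(x_{k+1}) \le f(x_k) + \langle g_{\delta_k,L_k,q}(x_k),\eta_{x_k}^*\rangle + \frac{L_k+q\rho}{2}\|\eta_{x_k}^*\|^2 + h(R_{x_k}(\eta_{x_k}^*)) + c_k,
\end{equation*}
where $c_k:=\frac{2-q}{2\rho^{q/(2-q)}}\delta_k^{2/(2-q)}$. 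The first three model terms together with $h(R_{x_k}(\eta_{x_k}^*))$ are precisely $\ell_{x_k}(\eta_{x_k}^*)$ with the quadratic coefficient $\frac{1}{2t_k}$ replaced by $\frac{L_k+q\rho}{2}$. Hence
\begin{equation*}
F(x_{k+1}) \le f(x_k) + \ell_{x_k}(\eta_{x_k}^*) + \Bigl(\frac{L_k+q\rho}{2}-\frac{1}{2t_k}\Bigr)\|\eta_{x_k}^*\|^2 + c_k.
\end{equation*}
Both parts then reduce to bounding $\ell_{x_k}(\eta_{x_k}^*)$ from above by $h(x_k)$ minus a multiple of $\|\eta_{x_k}^*\|^2$.

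For part (1), I use only the descent condition $\ell_{x_k}(\eta_{x_k}^*)\le\ell_{x_k}(0)=h(x_k)$. This gives $F(x_{k+1})\le F(x_k)+\bigl(\frac{L_k+q\rho}{2}-\frac{1}{2t_k}\bigr)\|\eta_{x_k}^*\|^2+c_k$. The hypothesis $t_k\le\frac{1}{2(L_k+q\rho)}$ means $L_k+q\rho\le\frac{1}{2t_k}$, so the coefficient is at most $\frac{1}{4t_k}-\frac{1}{2t_k}=-\frac{1}{4t_k}$, which yields \eqref{eq:decrease-1}.

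For part (2), I use stationarity together with retraction-convexity of $h$ to get a sharper bound. Since $\eta^*:=\eta_{x_k}^*$ is a stationary point of $\ell_{x_k}$, we have $0\in g_k+\frac{1}{t_k}\eta^*+\partial(h\circ R_{x_k})(\eta^*)$, where $g_k:=g_{\delta_k,L_k,q}(x_k)$. I split this with the Clarke sum rule, which applies because the smooth quadratic part is exact. So there is $\zeta\in\partial(h\circ R_{x_k})(\eta^*)$ with $\zeta=-g_k-\frac{1}{t_k}\eta^*$. Applying Definition \ref{de-retraction-convex} with $\xi=\eta^*$ and target $0$ gives
\begin{equation*}
h(x_k)\ge h(R_{x_k}(\eta^*))+\langle\zeta,-\eta^*\rangle = h(R_{x_k}(\eta^*))+\langle g_k,\eta^*\rangle+\frac{1}{t_k}\|\eta^*\|^2.
\end{equation*}
Therefore $\ell_{x_k}(\eta^*)\le h(x_k)-\frac{1}{2t_k}\|\eta^*\|^2$. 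Substituting into the displayed estimate gives the coefficient $\frac{L_k+q\rho}{2}-\frac{1}{t_k}\le\frac{1}{2t_k}-\frac{1}{t_k}=-\frac{1}{2t_k}$ under $t_k\le\frac{1}{L_k+q\rho}$, which is \eqref{eq:decrease-2}.

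The main obstacle is justifying the subdifferential step in part (2). I need the sum rule for Clarke subdifferentials of a $C^1$ function plus a locally Lipschitz function on $T_{x_k}\mathcal{M}$, which holds with equality. I also need the meaning of ``stationary point'' of $\ell_{x_k}$ to be $0\in\partial\ell_{x_k}(\eta^*)$ in the Euclidean Clarke sense on the tangent space. The term $h\circ R_{x_k}$ must be locally Lipschitz; I would assume this, as is implicit for $h$ continuous, Lipschitz and retraction-convex. Everything else is direct substitution.
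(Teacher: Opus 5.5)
Your proposal is correct and follows essentially the same route as the paper: part (1) combines \eqref{eq:inexact-inequality} with $\ell_{x_k}(\eta_{x_k}^*)\le\ell_{x_k}(0)=h(x_k)$, and part (2) combines the stationarity condition of the subproblem with retraction-convexity of $h$ at $\xi=\eta_{x_k}^*$, $\eta=0$. The only cosmetic difference is that you apply Definition~\ref{de-retraction-convex} directly to an element of $\partial(h\circ R_{x_k})(\eta_{x_k}^*)$, whereas the paper first uses the chain rule to write that element as $\mathcal{T}_{\eta_{x_k}^*}^\sharp\zeta_{x_{k+1}}$ with $\zeta_{x_{k+1}}\in\partial h(x_{k+1})$; this detour is not needed for this theorem.
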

\begin{proof}
Let $g_k=g_{\delta_k, L_k, q}(x_k)$.
Using inequality \eqref{eq:inexact-inequality}, we obtain
\begin{equation*}
  f(x_{k+1}) = f(R_{x_k}(\eta_{x_k}^*)) \leq f(x_k) + \langle g_k, \eta_{x_k}^* \rangle + \frac{L_k + q\rho}{2} \|\eta_{x_k}^*\|^2 + \frac{2-q}{2\rho^{\frac{q}{2-q}}} \delta_k^{\frac{2}{2-q}}.
\end{equation*}

\noindent(1) By the definition of $\eta_{x_k}^*$, we have
$$
\begin{aligned}
F(x_k) &\geq f(x_k) + \langle g_k, \eta_{x_k}^*\rangle + \frac{1}{2t_k}\|\eta_{x_k}^*\|^2 + h(R_{x_k}(\eta_{x_k}^*)) \\
&\geq f(R_{x_k}(\eta_{x_k}^*)) + h(R_{x_k}(\eta_{x_k}^*)) +\frac{1}{2}\left(\frac{1}{t_k}-(L_k+q\rho)\right)\|\eta_{x_k}^*\|^2
 - \frac{2-q}{2\rho^{\frac{q}{2-q}}} \delta_k^{\frac{2}{2-q}} \\
&\geq F(x_{k+1}) + \frac{1}{4t_k}\|\eta_{x_k}^*\|^2 - \frac{2-q}{2\rho^{\frac{q}{2-q}}} \delta_k^{\frac{2}{2-q}},
\end{aligned}
$$
which implies inequality \eqref{eq:decrease-1}.

\noindent(2) From the optimality condition of the subproblem in Algorithm \ref{alg:algorithm-IRPG}, we have
$$0 \in g_k + \frac{1}{t_k} \eta_{x_k}^* + \partial (h \circ R_{x_k})(\eta_{x_k}^*). $$
As shown in \cite{Hosseini-2011}, $\partial (h \circ R_{x_k})(\eta_{x_k}^*) = [D R_{x_k}(\eta_{x_k}^*)]^\sharp \partial h(R_{x_k}(\eta_{x_k}^*))$.
Since $D R_{x_k}(\eta_{x_k}^*) = \mathcal{T}_{\eta_{x_k}^*}$, it follows that
$\partial (h \circ R_{x_k})(\eta_{x_k}^*) = \mathcal{T}_{\eta_{x_k}^*}^\sharp \partial h(x_{k+1})$.
Thus there exists $\zeta_{x_{k+1}} \in \partial h(x_{k+1})$ such that
\begin{equation}\label{eq:eq1}
g_k + \frac{1}{t_k} \eta_{x_k}^* + \mathcal{T}_{\eta_{x_k}^*}^\sharp \zeta_{x_{k+1}} = 0.
\end{equation}
Since $h$ is retraction-convex with respect to the retraction $R$, there holds
\begin{equation}\label{eq:eq2}
h(x_k) - h(x_{k+1}) = h(R_{x_k}(0)) - h(R_{x_k}(\eta_{x_k}^*))\geq -\langle \mathcal{T}_{\eta_{x_k}^*}^\sharp \zeta_{x_{k+1}},\ \eta_{x_k}^* \rangle.
\end{equation}
We deduce that
$$
\begin{aligned}
f(x_{k+1}) &\leq f(x_k) + \langle g_k, \eta_{x_k}^* \rangle + \frac{L_k + q\rho}{2} \|\eta_{x_k}^*\|^2 + \frac{2-q}{2\rho^{\frac{q}{2-q}}} \delta_k^{\frac{2}{2-q}} \\
%&= f(x_k) + \langle g_k + \mathcal{T}_{\eta_{x_k}^*}^\sharp \zeta_{x_{k+1}}, \eta_{x_k}^* \rangle - \langle \mathcal{T}_{\eta_{x_k}^*}^\sharp \zeta_{x_{k+1}}, \eta_{x_k}^* \rangle
% + \frac{L_k + q\rho}{2} \|\eta_{x_k}^*\|^2 + \frac{2-q}{2\rho^{\frac{q}{2-q}}} \delta_k^{\frac{2}{2-q}} \\
&\leq f(x_k) - \frac{1}{t_k} \|\eta_{x_k}^*\|^2 + h(x_k) - h(x_{k+1}) + \frac{L_k + q\rho}{2} \|\eta_{x_k}^*\|^2 + \frac{2-q}{2\rho^{\frac{q}{2-q}}} \delta_k^{\frac{2}{2-q}} \\
&\leq f(x_k) - \frac{1}{2t_k} \|\eta_{x_k}^*\|^2 + h(x_k) - h(x_{k+1}) + \frac{2-q}{2\rho^{\frac{q}{2-q}}} \delta_k^{\frac{2}{2-q}},
\end{aligned}
$$
where the second inequality follows from \eqref{eq:eq1} and \eqref{eq:eq2}.
Then we obtain inequality \eqref{eq:decrease-2}.
\end{proof}

\noindent
\textbf{Remark:}
The recurrence inequality established in Theorem \ref{thm:IRPG-decrease} does not necessarily imply a descent property owing to the error term.
Nevertheless, it reveals an approximate descent property as shown in \eqref{eq:approximate descent},
which allows us to characterize the descent behavior of the algorithm in an alternative way.

The global convergence analysis of Algorithm \ref{alg:algorithm-IRPG} requires the following lemma.

\begin{lem}\label{lem-Hosseini}
\cite{Hosseini-2018} If $\{x_k\}$ and $\{\xi_k\}$ are sequences in $\mathcal{M}$ and $T\mathcal{M}$, respectively, such that $\xi_k \in \partial f(x_k)$ for each $k$,
and if $x_k\rightarrow x$, $f(x_k)\rightarrow f(x)$, and $\xi_k\rightarrow \xi$ as $k\rightarrow\infty$, then
$\xi \in \partial f(x)$.
\end{lem}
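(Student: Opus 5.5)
The plan is to reduce the statement to the classical closedness of the graph of the Euclidean Clarke subdifferential by working in a single chart around $x$. First, note that $f$ is locally Lipschitz and hence continuous, so the hypothesis $f(x_k)\to f(x)$ is automatic and will not be used in an essential way. Fix a chart $(U,\varphi)$ with $x\in U$ and set $g:=f\circ\varphi^{-1}$, which is locally Lipschitz on the open set $\varphi(U)\subset\br^n$. For $y\in U$, identify $T_y\mathcal{M}$ with $\br^n$ through $D\varphi(y)$, and let $G(y)$ be the (continuous, symmetric positive definite) matrix of the metric, so that $\langle u,v\rangle_y=u^\top G(y)v$ in coordinates. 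Convergence $\xi_k\to\xi$ in $T\mathcal{M}$ means exactly that $x_k\to x$ and that the coordinate vectors of $\xi_k$ converge to those of $\xi$. Discarding finitely many indices, we may assume $x_k\in U$ for all $k$.

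The first key step is a coordinate formula for the Riemannian subdifferential. I claim that, for $y\in U$,
\begin{equation*}
\partial f(y)=G(y)^{-1}\,\partial g(\varphi(y)).
\end{equation*}
To prove it, write $\hat f_y=f\circ R_y=g\circ\psi_y$ with $\psi_y:=\varphi\circ R_y$. The map $\psi_y$ is $C^1$ near $0_y$, and $D\psi_y(0_y)=D\varphi(y)$ because $DR_y(0_y)=\id$; in particular this derivative is invertible. The Clarke chain rule for composition with a $C^1$ map whose derivative is surjective (Clarke, Theorem 2.3.10) holds with equality. Hence the Euclidean Clarke subdifferential of $\hat f_y$ at $0$, taken in the coordinates of $T_y\mathcal{M}$ with the standard dot product, equals $\partial g(\varphi(y))$. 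Converting this dual object into a tangent vector via the metric $\langle\cdot,\cdot\rangle_y$ introduces the factor $G(y)^{-1}$, which gives the claim. This is the same identity the paper already uses, in the form $\partial(f\circ R_x)(0)=[DR_x(0)]^\sharp\partial f(x)$.

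The second step is Euclidean closedness. Set $z_k=\varphi(x_k)\to z=\varphi(x)$ and $w_k=G(x_k)\xi_k$. By continuity of $G$, $w_k\to w=G(x)\xi$, and by the first step $w_k\in\partial g(z_k)$. By definition this means $\langle w_k,v\rangle\le g^\circ(z_k;v)$ for every $v\in\br^n$. The function $(z,v)\mapsto g^\circ(z;v)$ is upper semicontinuous, which follows directly from its definition as a $\limsup$ (Clarke, Proposition 2.1.1). Letting $k\to\infty$ therefore gives $\langle w,v\rangle\le\limsup_k g^\circ(z_k;v)\le g^\circ(z;v)$ for all $v$, so $w\in\partial g(z)$. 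Applying the formula from the first step once more yields $\xi=G(x)^{-1}w\in\partial f(x)$.

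I expect the main obstacle to be the first step, namely the justification that the Riemannian Clarke subdifferential, which is defined through the retraction-dependent pullback at each base point $y$, agrees uniformly in $y$ with a single chart-based object. Two things need checking with care. One is that the Clarke chain rule holds with equality and not merely inclusion, which relies on the invertibility of $D\psi_y(0_y)$. The other is that the metric identification is handled consistently across the varying tangent spaces $T_{x_k}\mathcal{M}$. Once this identification is in place, the remaining argument is the standard upper-semicontinuity argument in $\br^n$.
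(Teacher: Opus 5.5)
Your proof is correct. The paper itself gives no argument for this lemma; it is imported verbatim from \cite{Hosseini-2018}. So your proposal is a self-contained justification, not a variant of something in the text.

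The step that carries all the content is the one you isolate. Because $\psi_y=\varphi\circ R_y$ is $C^1$ with $D\psi_y(0_y)=D\varphi(y)$ invertible, Clarke's chain rule holds with equality. This gives $\partial f(y)=G(y)^{-1}\partial g(\varphi(y))$ for every $y$ in the chart domain. This says more than the identity $\partial(f\circ R_x)(0)=[DR_x(0)]^\sharp\partial f(x)$ used in the paper, which is a tautology under the paper's definition $\partial f(x):=\partial\hat f_x(0_x)$.

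Your worry about uniformity in $y$ is unfounded. The identity is pointwise, it involves a single fixed function $g$, and the retraction disappears from it entirely. So the only limit you take is the Euclidean one: continuity of $G$ plus upper semicontinuity of $g^\circ(\cdot\,;v)$ for each fixed $v$. That is just the classical closedness of the graph of the Euclidean Clarke subdifferential.

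Compared with the bare citation, your route makes explicit two facts the paper leaves implicit. First, the Riemannian Clarke subdifferential defined through $f\circ R_x$ does not depend on the choice of retraction. Second, the hypothesis $f(x_k)\to f(x)$ is redundant when $f$ is locally Lipschitz.

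Conversely, local Lipschitz continuity is indispensable in your argument. You need it for $\partial f$ to be defined, for the chain rule, and for the upper semicontinuity of $g^\circ$. The paper applies the lemma to $h$, which it assumes only to be continuous, so local Lipschitz continuity of $h$ has to be read as an implicit standing hypothesis there.
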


The following theorem demonstrates that, under mild conditions on the oracle errors,
the global convergence of Algorithm \ref{alg:algorithm-IRPG} can be established.
The convergence results include: the norm of the search direction converges to zero,
the sequence of function values converges to the function value at any accumulation point of the iterates, and any accumulation point of the iterates is a stationary point.

\begin{thm}\label{thm:IRPG-convergence}
Suppose the conditions of Theorem \ref{thm:IRPG-decrease} hold,
and suppose further that
\begin{equation}\label{}
\sum_{k=0}^{\infty} \delta_k^{\frac{2}{2-q}}<\infty.
\end{equation}
Then the following convergence results hold:
\begin{description}
  \item[(1)] $\lim_{k\rightarrow\infty}\|\eta_{x_k}^*\|=0$.
  \item[(2)] If $\bar{x}$ is an accumulation point of the sequence $\{x_k\}$, then $f(x_k)\to f(\bar{x})$.
  \item[(3)] If $f$ is continuously differentiable and satisfies Assumption \ref{assump:bound}, and if the sequence $\{t_k\}$ is bounded below by a positive constant $t_{\min}$,
  then any accumulation point of the sequence $\{x_k\}$ is a stationary point.
\end{description}
\end{thm}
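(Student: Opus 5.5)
The plan is to run everything off the approximate descent inequality of Theorem \ref{thm:IRPG-decrease}. Write $\varepsilon_k := \frac{2-q}{2\rho^{q/(2-q)}}\delta_k^{2/(2-q)}$; by hypothesis $\sum_k \varepsilon_k<\infty$. In either case of Theorem \ref{thm:IRPG-decrease} we have
\begin{equation*}
F(x_{k+1}) + \frac{c}{t_k}\|\eta_{x_k}^*\|^2 \le F(x_k) + \varepsilon_k, \qquad c\in\{\tfrac14,\tfrac12\}.
\end{equation*}

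\textbf{Step 1: part (1).} Sum this inequality from $k=0$ to $N$ and use $F\ge F^*$. This gives
\begin{equation*}
\sum_{k=0}^N \frac{c}{t_k}\|\eta_{x_k}^*\|^2 \le F(x_0)-F^*+\sum_{k}\varepsilon_k<\infty .
\end{equation*}
To conclude $\|\eta_{x_k}^*\|\to0$ we need $\sup_k t_k<\infty$.
\begin{itemize}
\item When $q>0$, this follows from $t_k\le 1/(L_k+q\rho)\le 1/(q\rho)$.
\item When $q=0$, the step-size rule alone does not bound $t_k$ if $L_k$ can vanish. Here I would invoke a uniform upper bound on $t_k$, which is implicit in the step-size choice. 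This is the one point where a harmless extra standing assumption may be needed.
\end{itemize}
With that bound, $\sum_k\|\eta_{x_k}^*\|^2<\infty$, hence $\|\eta_{x_k}^*\|\to0$.

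\textbf{Step 2: part (2).} The sequence $a_k:=F(x_k)+\sum_{j\ge k}\varepsilon_j$ is nonincreasing by the displayed inequality. It is bounded below by $F^*$, so it converges. Since the tail $\sum_{j\ge k}\varepsilon_j\to0$, $F(x_k)$ converges to some $F_\infty$. If $x_{k_j}\to\bar x$, continuity of $F$ gives $F(x_{k_j})\to F(\bar x)$, so $F_\infty=F(\bar x)$. I read the statement's $f(x_k)\to f(\bar x)$ as this convergence of the objective values $F(x_k)\to F(\bar x)$. Along the subsequence, the individual pieces $f(x_{k_j})\to f(\bar x)$ and $h(x_{k_j})\to h(\bar x)$ also hold by continuity, which is what Step 3 uses.

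\textbf{Step 3: part (3).} Let $x_{k_j}\to\bar x$.
\begin{itemize}
\item By Step 1 and continuity of $R$ on $T\mathcal M$, we have $x_{k_j+1}=R_{x_{k_j}}(\eta^*_{x_{k_j}})\to R_{\bar x}(0)=\bar x$. By continuity of $h$, also $h(x_{k_j+1})\to h(\bar x)$.
\item As in the proof of Theorem \ref{thm:IRPG-decrease}(2), stationarity of the subproblem gives $\zeta_{k_j+1}\in\partial h(x_{k_j+1})$ with
\begin{equation*}
g_{k_j}+\tfrac{1}{t_{k_j}}\eta^*_{x_{k_j}}+\mathcal T^\sharp_{\eta^*_{x_{k_j}}}\zeta_{k_j+1}=0 .
\end{equation*}
\item Assumption \ref{assump:bound} with $g=\grad f$ gives $\|g_{k_j}-\grad f(x_{k_j})\|\le C_q\delta_{k_j}^{1/(2-q)}\to0$, since the summable $\delta_k^{2/(2-q)}$ tends to zero.
\item Continuous differentiability of $f$ gives $\grad f(x_{k_j})\to\grad f(\bar x)$ in $T\mathcal M$.
\item Because $t_k\ge t_{\min}$, the middle term satisfies $\eta^*_{x_{k_j}}/t_{k_j}\to0$.
\end{itemize}
Consequently $\mathcal T^\sharp_{\eta^*_{x_{k_j}}}\zeta_{k_j+1}\to-\grad f(\bar x)$.

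Since $\mathcal T_\eta=DR_x(\eta)$ is smooth in $(x,\eta)$ and equals the identity at $\eta=0_x$, its adjoint is invertible near the zero section with uniformly bounded inverse. I would verify this in a local chart or in an ambient embedding. It follows that $\{\zeta_{k_j+1}\}$ is bounded; alternatively, boundedness follows from local Lipschitz continuity of $h$ near $\bar x$. Passing to a further subsequence, $\zeta_{k_j+1}\to\bar\zeta$ with $\bar\zeta=-\grad f(\bar x)$.

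Lemma \ref{lem-Hosseini}, applied to $h$ with $x_{k_j+1}\to\bar x$ and $h(x_{k_j+1})\to h(\bar x)$, yields $\bar\zeta\in\partial h(\bar x)$. Hence $0\in\grad f(\bar x)+\partial h(\bar x)=\partial F(\bar x)$, so $\bar x$ is stationary.

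\textbf{Main obstacle.} The hardest step is making rigorous the limits of vectors living in different tangent spaces, namely $\grad f(x_{k_j})$, $\zeta_{k_j+1}$ and $\mathcal T^\sharp\zeta$. I would handle these uniformly by working in a coordinate chart around $\bar x$, where $T\mathcal M$ is locally a product and the metric, $R$ and $DR$ are continuous. Lemma \ref{lem-Hosseini} is already phrased in this tangent-bundle sense of convergence.
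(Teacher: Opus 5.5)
Your proposal follows the paper's proof essentially step for step: it sums the approximate descent inequality for (1), uses the nonincreasing sequence $F(x_k)+\sum_{j\ge k}\varepsilon_j$ for (2), and for (3) combines the subproblem optimality condition, inversion of $[DR_{x_{k_j}}(\eta^*_{x_{k_j}})]^\sharp$ near the zero section and Lemma~\ref{lem-Hosseini}; your reading of (2) as $F(x_k)\to F(\bar{x})$ is exactly what the paper proves. Your caveat at $q=0$ is well founded rather than a defect of your argument, because the paper's step $t_k\le \frac{1}{2q\rho}$ is vacuous there (e.g.\ $\mathcal{M}=\mathbb{R}$, $f\equiv 0$, $h=-\arctan$, $L_k=1/(2t_k)$, $t_k>1+(x_k+1)^2$, $x_0=0$ and $\eta^*_{x_k}$ the global minimizer of $\ell_{x_k}$ give $\|\eta^*_{x_k}\|>1$ for all $k$), so an upper bound on $t_k$ (or a positive lower bound on $L_k$) is genuinely needed.
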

\begin{proof}
We consider only the first case of Theorem \ref{thm:IRPG-decrease}.
The other case can be handled similarly.

\noindent(1) From \eqref{eq:decrease-1}, we obtain
\begin{equation}\label{}
\frac{1}{4t_k} \|\eta_{x_k}^*\|^2 \leq F(x_k) - F(x_{k+1}) + \frac{2-q}{2\rho^{\frac{q}{2-q}}} \delta_k^{\frac{2}{2-q}}.
\end{equation}
Summing both sides of this inequality from $k=0$ to $n-1$, we obtain
\begin{equation*}
  \sum_{k=0}^{n-1} \frac{1}{4t_k} \|\eta_{x_k}^*\|^2 \leq F(x_0) - F(x_{n}) + \frac{2-q}{2\rho^{\frac{q}{2-q}}} \sum_{k=0}^{n-1} \delta_k^{\frac{2}{2-q}}.
\end{equation*}
Letting $n\to\infty$ and using the fact that $F(x_{n})\ge F^*$, we obtain
\begin{equation}\label{eq:convergence-1}
\sum_{k=0}^{\infty} \frac{1}{ t_k} \|\eta_{x_k}^*\|^2 \leq 4(F(x_0) - F^*) +  \frac{2(2-q)}{\rho^{\frac{q}{2 - q}}} \sum_{k=0}^{\infty} \delta_k^{\frac{2}{2 - q}}<\infty.
\end{equation}
Thus, $\lim_{k\rightarrow\infty}\frac{1}{ t_k} \|\eta_{x_k}^*\|^2=0$.
Since $t_k \leq \frac{1}{2(L_k + q\rho)}\le \frac{1}{2q\rho}$, it follows that
$\lim_{k\rightarrow\infty}\|\eta_{x_k}^*\|=0$.

\noindent(2) Defining $A_k:=\frac{2-q}{2\rho^{\frac{q}{2-q}}}\sum_{i=k}^\infty \delta_i^{\frac{2}{2-q}}$,
we have $A_k\to 0$ as $k\to \infty$ and $A_k-A_{k+1}=\frac{2-q}{2\rho^{\frac{q}{2-q}}} \delta_k^{\frac{2}{2-q}}$
for all $k\ge0$. Then inequality \eqref{eq:decrease-1} can be rewritten as
\begin{equation}\label{eq:approximate descent}
F(x_{k+1})+A_{k+1} \leq F(x_k) + A_k - \frac{1}{4t_k} \|\eta_{x_k}^*\|^2.
\end{equation}
Thus, the sequence $\{F(x_k) + A_k\}$ is nonincreasing.
Since $F(x_k)\ge F^*$ and $A_k\to 0$, the sequence $\{F(x_k) + A_k\}$ is bounded from below, and hence convergent.
Given that $A_k\to 0$, the sequence $\{F(x_k)\}$ is convergent as well.
Since $\bar{x}$ is an accumulation point of the sequence $\{x_k\}$, there exists a subsequence $\{x_{k_j}\}$ such that $x_{k_j} \to \bar{x}$ as $j \to \infty$.
We conclude that $\lim_{k\to\infty}F(x_k)=\lim_{j\to\infty}F(x_{k_j})=F(\bar{x})$.

\noindent(3) Let $\bar{x}$ be an accumulation point of the sequence $\{x_k\}$, and $\{x_{k_j}\}$ be a subsequence such that $x_{k_j} \to \bar{x}$ as $j \to \infty$.

Let $g_k=g_{\delta_k, L_k, q}(x_k)$. By the optimality condition of the subproblem in Algorithm \ref{alg:algorithm-IRPG},
there holds
$$
0 \in  g_{k_j} + \frac{1}{t_{k_j}} \eta_{x_{k_j}}^* + [ D R_{x_{k_j}}(\eta_{x_{k_j}}^*) ]^\sharp \partial h(x_{k_j+1}).
$$
Since
$\lim_{j \to \infty} [ D R_{x_{k_j}}(\eta_{x_{k_j}}^*) ]^\sharp = [ D R_{\bar{x}}(0) ]^\sharp = \mathrm{id}_{T_{\bar{x}}\mathcal{M}}$,
$[ D R_{x_{k_j}}(\eta_{x_{k_j}}^*) ]^\sharp$ is invertible for sufficiently large $j$. Hence
$$
- [ D R_{x_{k_j}}(\eta_{x_{k_j}}^*) ]^{-\sharp} \left( g_{k_j} + \frac{1}{t_{k_j}} \eta_{x_{k_j}}^* \right) \in \partial h(x_{k_j+1}).
$$

By Assumption \ref{assump:bound}, we have $\| g_{k_j} - \grad f(x_{k_j}) \| \to 0$.
Furthermore, since $\grad f(x_{k_j}) \to \grad f(\bar{x})$, we have $g_{k_j} \to \grad f(\bar{x})$
as $j \to \infty$.
Since $t_{k_j}\ge t_{\min}$, we have $\|\frac{1}{t_{k_j}} \eta_{x_{k_j}}^*\|\le \frac{1}{t_{\min}}\|\eta_{x_{k_j}}^*\|$,
and hence $\frac{1}{t_{k_j}} \eta_{x_{k_j}}^*\to 0$ as $j \to \infty$.
Therefore, we obtain
\begin{equation*}
  - [ D R_{x_{k_j}}(\eta_{x_{k_j}}^*) ]^{-\sharp} \left( g_{k_j} + \frac{1}{t_{k_j}} \eta_{x_{k_j}}^* \right)
  \to - \mathrm{grad} f(\bar{x}) \ \
  \text{as}\ \ j\to\infty.
\end{equation*}
We also have $x_{k_j+1} = R_{x_{k_j}}(\eta_{x_{k_j}}^*) \to R_{\bar{x}}(0) = \bar{x}$ as $j \to \infty$.
Moreover, since $h$ is continuous, $h(x_{k_j+1}) \to h(\bar{x})$.
It follows from Lemma \ref{lem-Hosseini} that
$- \mathrm{grad} f(\bar{x}) \in \partial h(\bar{x})$,
that is, $0 \in \mathrm{grad} f(\bar{x}) + \partial h(\bar{x}) = \partial F(\bar{x})$,
and hence $\bar{x}$ is a stationary point.
\end{proof}

\subsection{Global sequential convergence under the Riemannian Kurdyka--{\L}ojasiewicz property}

The Kurdyka--{\L}ojasiewicz (KL) property has been widely used in the convergence analysis of various convex
and nonconvex optimization algorithms in the Euclidean setting \cite{Bolte-2007,Bolte-2009,Bolte-2013,Bolte-2014,Li-2023}.
In what follows, we employ the Riemannian KL property to study the global sequential convergence of Algorithm \ref{alg:algorithm-IRPG},
that is, the full sequence of iterates converges to a single limit point, and this limit point is a stationary point.

\begin{de}\label{de:Riemannian-KL}
\cite{Huang-2022,Li-2023} A function $f: \mathcal{M} \to \mathbb{R}$ is said to satisfy the Riemannian KL property
at $\bar{x} \in \mathcal{M}$ if there exist $\varepsilon \in (0, \infty]$,
a neighborhood $U \subset \mathcal{M}$ of $\bar{x}$, and a continuous concave function $\varrho: [0, \varepsilon) \to [0, \infty)$ with
\begin{equation*}
  \varrho(0) = 0,~~~~\varrho\in C^1(0,\varepsilon),~~~~\text{and}~~~~\varrho'(x) > 0~~~\forall\,x\in(0,\varepsilon)
\end{equation*}
such that for all $x\in U\cap \{x\in \mathcal{M}:0<|f(x)-f(\bar{x})|<\varepsilon\}$ the KL inequality holds, i.e.,
\begin{equation*}
  \varrho'(|f(x) - f(\bar{x})|) \operatorname{dist}(0, \partial f(x)) \geq 1,
\end{equation*}
where $\mathrm{dist}(0, \partial f(x)) := \inf\{\|v\|_x: v \in \partial f(x)\}$.
The function $\varrho$ is referred to as the desingularizing function.
\end{de}

In \cite{Li-2023}, Li et al. established a KL inequality in the Euclidean setting, which serves as a slightly stronger variant of the classical formulation.
Specifically, the authors added an absolute value to $f(x) - f(\bar{x})$ in the original KL inequality.
However, this modified version of KL inequality also holds for semialgebraic and subanalytic functions, which underscores its generality.
Therefore, we adopt this modified version in the present work to define the corresponding Riemannian KL inequality.

The following assumption regarding the desingularizing function was first introduced in \cite{Li-2023} and recently adopted in \cite{Deng-2025}.

\begin{assump}\label{assump:desingularizing-function}
\cite{Li-2023,Deng-2025} The desingularizing function $\varrho$ satisfies the quasi-additivity-type property, that is, there exists a constant $C_\varrho>0$ such that
\begin{equation}\label{}
[\varrho'(x+y)]^{-1} \leq C_\varrho\left[(\varrho'(x))^{-1} + (\varrho'(y))^{-1}\right], \quad \forall\, x,y \in (0,\varepsilon) \text{ with } x+y < \varepsilon.
\end{equation}
\end{assump}

The following lemma shows that if the Riemannian KL property holds at every point in a compact set on which the function is constant, then there exists
a common desingularizing function such that the Riemannian KL property holds for all
points in the compact set.

\begin{lem}\label{lem:Riemannian-KL}
\cite{Huang-2022} Let $\bar{\Omega}$ be a compact set in $\mathcal{M}$ and let $f: \mathcal{M} \to (-\infty, \infty]$ be a continuous function. Assume that $f$ is constant on $\bar{\Omega}$ and satisfies the Riemannian KL property at every point of $\bar{\Omega}$. Then, there exist $\varpi > 0$, $\varepsilon > 0$, and a continuous concave function $\varrho: [0, \varepsilon) \to [0, \infty)$ such that for all $\bar{x}$ in $\bar{\Omega}$ and all $x$ in the following intersection:
\begin{equation*}
\{x \in \mathcal{M}: \operatorname{dist}(x, \bar{\Omega}) < \varpi\} \cap \{x \in \mathcal{M}: 0 < |f(x)-f(\bar{x})| < \varepsilon\},
\end{equation*}
one has
\begin{equation*}
\varrho'(|f(x) - f(\bar{x})|) \operatorname{dist}(0, \partial f(u)) \geq 1.
\end{equation*}
\end{lem}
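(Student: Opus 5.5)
This is the uniformization lemma of \cite{Huang-2022}, a Riemannian analogue of the Euclidean result of Bolte et al.\ \cite{Bolte-2014}. I would prove it by a compactness argument followed by gluing the desingularizing functions. (In the displayed inequality, $u$ should read $x$.)

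\textbf{Step 1: finite cover.} For each $y\in\bar{\Omega}$, the Riemannian KL property (Definition \ref{de:Riemannian-KL}) gives $\varepsilon_y>0$, a neighborhood $U_y$ and a desingularizer $\varrho_y$. Shrink $U_y$ to an open geodesic ball $B(y,r_y)$. Compactness of $\bar{\Omega}$ yields finitely many points $y_1,\dots,y_p$ with $\bar{\Omega}\subset\bigcup_i B(y_i,r_{y_i}/2)$.

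Take $\varpi=\min_i r_{y_i}/2$ and $\varepsilon=\min_i\varepsilon_{y_i}$. If $\mathrm{dist}(x,\bar{\Omega})<\varpi$, then, since $\bar{\Omega}$ is compact, there is $z\in\bar{\Omega}$ with $\mathrm{dist}(x,z)<\varpi$. Choosing $i$ with $z\in B(y_i,r_{y_i}/2)$, the triangle inequality gives $x\in B(y_i,r_{y_i})\subset U_{y_i}$.

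\textbf{Step 2: use constancy of $f$.} Because $f$ is constant on $\bar{\Omega}$, we have $f(\bar{x})=f(y_i)=:\bar f$ for every $\bar{x}\in\bar{\Omega}$. So the condition $0<|f(x)-f(\bar{x})|<\varepsilon$ is the same as $0<|f(x)-\bar f|<\varepsilon\le\varepsilon_{y_i}$. The KL inequality at $y_i$ then gives
\[
\varrho_{y_i}'(|f(x)-\bar f|)\,\mathrm{dist}(0,\partial f(x))\ge 1 .
\]

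\textbf{Step 3: a common desingularizer.} Define $\varrho(s)=\sum_{i=1}^p\varrho_{y_i}(s)$ on $[0,\varepsilon)$. This function is continuous and concave, satisfies $\varrho(0)=0$, is $C^1$ on $(0,\varepsilon)$, and has
\[
\varrho'=\sum_{i=1}^p\varrho_{y_i}'\ \ge\ \varrho_{y_i}'>0 \quad\text{for each } i .
\]
Hence $\varrho'(|f(x)-\bar f|)\,\mathrm{dist}(0,\partial f(x))\ge 1$ for every $x$ in the stated intersection and every $\bar{x}\in\bar{\Omega}$.

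\textbf{Main point of care.} The argument hinges on the metric step in Step 1: the $\varpi$-neighborhood of $\bar{\Omega}$ must be covered by the finitely many KL neighborhoods. This is why I shrink to half-radius balls and use the Lebesgue-number style choice of $\varpi$. The other steps are routine: a finite sum of desingularizers remains admissible, and summing only enlarges $\varrho'$.
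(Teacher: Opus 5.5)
Your proof is correct and is the standard uniformization argument. The paper states this lemma without proof and cites \cite{Huang-2022}, where it is obtained as in \cite[Lemma 6]{Bolte-2014}: take a finite subcover of KL neighborhoods, choose a uniform tube radius around $\bar{\Omega}$, set $\varepsilon=\min_i\varepsilon_{y_i}$, and use the sum of the local desingularizers. Your half-radius choice $\varpi=\min_i r_{y_i}/2$ just makes the tube radius explicit, and you are right that $\partial f(u)$ in the display should read $\partial f(x)$.
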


In order to study the global sequential convergence of Algorithm \ref{alg:algorithm-IRPG} based on the Riemannian
KL property, we also require two results regarding the retraction and vector transport,
given in Lemmas \ref{lem:Huang-Retr-estimate} and \ref{lem:Huang-estimate}, respectively.

\begin{lem}\label{lem:Huang-Retr-estimate}
\cite{Huang-2022}
Let $\bar{\Omega}\subset \mathcal{M}$ be a compact set. Then, for any given $\delta_T>0$,
there exists a constant $\kappa_0>0$ such that
\begin{equation}\label{}
  \mathrm{dist}(x,R_x(\eta))\le \kappa_0\|\eta\|
\end{equation}
for all $x\in \bar{\Omega}$ and $\eta \in T_x\mathcal{M}$ satisfying $\|\eta\| \le \delta_T$.
\end{lem}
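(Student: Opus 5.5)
We prove the bound by a compactness argument on the closed set $K:=\{(x,\eta)\in T\mathcal{M}: x\in\bar{\Omega},\ \|\eta\|\le\delta_T\}$. This set is compact: the tangent bundle is locally trivial, $\bar{\Omega}$ is compact, and each fiber piece is a closed ball.

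First we treat the locally uniform case. Fix $x_0\in\bar{\Omega}$ and work in a local trivialization of $T\mathcal{M}$ over a coordinate chart $V\ni x_0$. Take $(x,\eta)$ with $x$ near $x_0$ and $\eta$ small. Then the curve $\gamma(s)=R_x(s\eta)$, $s\in[0,1]$, joins $x$ to $R_x(\eta)$. Its length is
\[
\int_0^1 \|DR_x(s\eta)[\eta]\|\,ds \le \Big(\sup_{(y,\xi)\in K} \|DR_y(\xi)\|_{\mathrm{op}}\Big)\|\eta\|,
\]
where the operator norm maps $(T_y\mathcal{M},\|\cdot\|_y)$ to $(T_{R_y(\xi)}\mathcal{M},\|\cdot\|_{R_y(\xi)})$. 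Since $s\eta$ stays in the ball of radius $\delta_T$ whenever $\eta$ does, every point $(x,s\eta)$ lies in $K$.

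In fact this argument is global and needs no chart. The map $(y,\xi)\mapsto \|DR_y(\xi)\|_{\mathrm{op}}$ is continuous on $T\mathcal{M}$, because $R$ is smooth and the metric is smooth. It therefore attains a finite maximum $\kappa_0$ on the compact set $K$.

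Now take $x\in\bar{\Omega}$ and $\|\eta\|\le\delta_T$. Riemannian distance is the infimum of lengths of piecewise smooth curves, so
\[
\mathrm{dist}(x,R_x(\eta))\le \mathrm{length}(\gamma)\le \kappa_0\|\eta\|.
\]
If the maximum is zero, we replace $\kappa_0$ by any positive number to meet the requirement $\kappa_0>0$.

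The only delicate point is justifying continuity of the operator norm and compactness of $K$. For continuity, in a local orthonormal frame, $\|DR_y(\xi)\|_{\mathrm{op}}$ is the largest singular value of a matrix depending smoothly on $(y,\xi)$. For compactness, cover $\bar{\Omega}$ by finitely many trivializing neighborhoods with compact closures. Over each, $K$ is the image of a compact set (base point times closed ball), using orthonormal frames.
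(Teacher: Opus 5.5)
The paper gives no proof of this lemma. It quotes it from \cite{Huang-2022}, so there is no in-paper argument to match, and your proposal supplies a correct, self-contained one.

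Your argument bounds $\mathrm{dist}(x,R_x(\eta))$ by the length of $s\mapsto R_x(s\eta)$, and then by $\max_{K}\|DR_y(\xi)\|_{\mathrm{op}}\,\|\eta\|$. This works for three reasons, each of which you justify adequately:
\begin{itemize}
\item $K$ is compact;
\item each fiber of $K$ is a ball, so $(x,s\eta)\in K$ for $s\in[0,1]$;
\item the fiberwise operator norm $(y,\xi)\mapsto\|DR_y(\xi)\|_{\mathrm{op}}$ is continuous on $T\mathcal{M}$.
\end{itemize}

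A common alternative route splits into two regimes. It combines a local Lipschitz estimate for the retraction near the zero section (made uniform over the compact $\bar{\Omega}$) with a compactness bound on $\mathrm{dist}(x,R_x(\eta))/\|\eta\|$ over the shell $\{\delta\le\|\eta\|\le\delta_T\}$. Your argument instead handles small and large $\eta$ in one step and gives the explicit constant $\kappa_0=\max_K\|DR_y(\xi)\|_{\mathrm{op}}$. The price is that you need $R$ to be $C^1$ on all of $K$, whereas the split argument needs differentiability only near the zero section and mere continuity elsewhere. The paper defines a retraction as a smooth map on all of $T\mathcal{M}$, so this costs nothing here.

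Two cosmetic remarks. The chart-based paragraph is redundant given the global argument that follows it. The fallback for a zero maximum is never triggered, because $DR_x(0_x)=\mathrm{id}$ gives $\|DR_x(0_x)\|_{\mathrm{op}}=1$.
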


\begin{lem}\label{lem:Huang-estimate}
\cite{Huang-2022}
Let $\xi$ be a locally Lipschitz continuous vector field on $\mathcal{M}$. Given a constant $a\in\br$ and a compact set $\bar{\Omega} \subset \mathcal{M}$, there exist positive constants $\mu$, $L_u$, and $L_v$ such that
$$
\|\xi_{y} - \mathcal{T}_{\eta_x}^{-\sharp} (\xi_x + a\eta_x)\| \leq (L_u+|a|L_v) \|\eta_x\|
$$
for all $x\in \bar{\Omega}$ and $\eta_x \in T_x\mathcal{M}$ satisfying $\|\eta_x\| \le \mu$, where $y = R_x(\eta_x)$.
\end{lem}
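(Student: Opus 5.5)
This lemma is quoted from \cite{Huang-2022}, so the plan is to reconstruct the standard argument: a compactness argument combined with smoothness of the retraction and of the vector transport by differentiated retraction $\mathcal{T}_{\eta_x}=DR_x(\eta_x)$. Set $y=R_x(\eta_x)$ and split the quantity with the triangle inequality:
\begin{equation*}
\|\xi_y-\mathcal{T}_{\eta_x}^{-\sharp}(\xi_x+a\eta_x)\|
\le \|\xi_y-\mathcal{T}_{\eta_x}^{-\sharp}\xi_x\| + |a|\,\|\mathcal{T}_{\eta_x}^{-\sharp}\eta_x\|.
\end{equation*}
The first term will give $L_u\|\eta_x\|$ and the second $L_v\|\eta_x\|$.

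\textbf{Step 1 (uniform invertibility).} Since $\mathcal{T}_{0_x}=\mathrm{id}$ and $(x,\eta)\mapsto DR_x(\eta)$ is smooth, continuity plus compactness of $\bar\Omega$ give a $\mu_1>0$ with two properties. First, $\mathcal{T}_{\eta_x}$ is invertible for all $x\in\bar\Omega$ and $\|\eta_x\|\le\mu_1$. Second, $\|\mathcal{T}_{\eta_x}^{-\sharp}\|\le 2$ uniformly on the compact set $\{(x,\eta):x\in\bar\Omega,\|\eta\|\le\mu_1\}$. This immediately bounds the second term with $L_v=2$.

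\textbf{Step 2 (the first term).} Insert the parallel transport along the minimizing geodesic from $x$ to $y$:
\begin{equation*}
\|\xi_y-\mathcal{T}_{\eta_x}^{-\sharp}\xi_x\|\le \|\xi_y-\mathrm{P}_{x\to y}\xi_x\| + \|(\mathrm{P}_{x\to y}-\mathcal{T}_{\eta_x}^{-\sharp})\xi_x\|.
\end{equation*}
The $y$'s range over a compact set, namely the image of $\{(x,\eta):x\in\bar\Omega,\|\eta\|\le\mu_1\}$ under $R$. Local Lipschitz continuity of $\xi$ on that compact set, together with Lemma \ref{lem:Huang-Retr-estimate}, bounds the first piece by $\tilde L\,\mathrm{dist}(x,y)\le\tilde L\kappa_0\|\eta_x\|$. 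This requires $\mu$ small enough that $\kappa_0\mu$ is below the injectivity radius of that compact set. For the second piece, $\|\xi_x\|$ is bounded on $\bar\Omega$ by continuity, so it suffices to show
\begin{equation*}
\|\mathrm{P}_{x\to y}-\mathcal{T}_{\eta_x}^{-\sharp}\|\le c\|\eta_x\|.
\end{equation*}

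\textbf{Step 3 (main obstacle).} The hard step is proving $\|\mathrm{P}_{x\to y}-\mathcal{T}_{\eta_x}^{-\sharp}\|\le c\|\eta_x\|$. Both operators are smooth in $(x,\eta_x)$ near the zero section, and both equal the identity at $\eta_x=0$. Parallel transport is smooth there because $y$ stays within the injectivity radius, and $\mathcal{T}^{-\sharp}$ is smooth because of the invertibility in Step 1. To compare them in a common space I would apply $\mathrm{P}_{y\to x}$ and consider $E(x,\eta)=\mathrm{P}_{y\to x}\mathcal{T}_{\eta}^{-\sharp}-\mathrm{id}$ on $T_x\mathcal{M}$. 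This is a smooth map of $\eta$ in a neighborhood of the zero section that vanishes at $\eta=0$. The mean value theorem in local trivializations, with derivatives bounded uniformly on a compact neighborhood of the zero section over $\bar\Omega$, then gives $\|E(x,\eta)\|\le c\|\eta\|$.

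The technical care lies in the uniformity over $x$. I would handle it by covering $\bar\Omega$ with finitely many coordinate charts that trivialize $T\mathcal{M}$. Taking $\mu$ as the minimum of the finitely many radii obtained gives $L_u=\tilde L\kappa_0+c\sup_{\bar\Omega}\|\xi\|$ and $L_v=2$.
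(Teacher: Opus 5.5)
The paper does not prove this lemma; it imports it from \cite{Huang-2022}. Your reconstruction is correct and follows the standard argument behind it: split off the $a\eta_x$ term using a uniform bound on $\mathcal{T}_{\eta_x}^{-\sharp}$, compare $\mathcal{T}_{\eta_x}^{-\sharp}$ with parallel transport up to $\mathcal{O}(\|\eta_x\|)$, and finish with the local Lipschitz property of $\xi$ and Lemma \ref{lem:Huang-Retr-estimate}. You establish $\|\mathrm{P}_{x\to y}-\mathcal{T}_{\eta_x}^{-\sharp}\|\le c\|\eta_x\|$ directly from smoothness and the mean value theorem rather than quoting it as a known vector-transport estimate. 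Since the mean value theorem can be applied fiberwise in the fixed space $T_x\mathcal{M}$, the chart covering is not actually needed.
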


To apply Lemmas \ref{lem:Huang-Retr-estimate} and \ref{lem:Huang-estimate} in the subsequent proof, we impose the following assumption.

\begin{assump}\label{assump:compact}
%Let $\{x_k\}$ be a sequence in $\mathcal{M}$.
There exists a compact set $\bar{\Omega} \subset \mathcal{M}$
such that the sequence $\{x_k\}$ lies entirely within $\bar{\Omega}$.
\end{assump}

Now, we are in a position to establish the convergence of the iterates $\{x_k\}$ generated by
Algorithm \ref{alg:algorithm-IRPG} to a single stationary point.
The proof draws on arguments from \cite[Theorem 4]{Huang-2022}, \cite[Theorem 2]{Deng-2025}, and \cite[Theorem 3.6]{Li-2023}.

\begin{thm}\label{thm:point-convergence}
Suppose that the conditions of Theorem \ref{thm:IRPG-convergence} hold,
that the sequence of oracle errors $\{\delta_k\}$ is positive,
and that $f$ is locally Lipschitz continuously differentiable on $\mathcal{M}$.
Let the sequence $\{x_k\}$ be generated by Algorithm \ref{alg:algorithm-IRPG} and satisfy Assumption \ref{assump:compact},
and let $\mathcal{S}$ denote the set of all accumulation points of $\{x_k\}$.
Suppose that $F$ satisfies the Riemannian KL property at every point in $\mathcal{S}$ with
the desingularizing function $\varrho$ satisfying Assumption \ref{assump:desingularizing-function}.
Suppose further that
\begin{equation}\label{eq:thm-KL-condition}
  \sum_{k=0}^\infty\left(\varrho'\big(\frac{2-q}{2\rho^{\frac{q}{2-q}}}\sum_{i=k}^\infty \delta_i^{\frac{2}{2-q}}\big)\right)^{-1}< \infty.
\end{equation}
Then, for any $\bar{x}\in \mathcal{S}$, we have $x_k\to \bar{x}$ as $k\to\infty$. It follows that $\mathcal{S}$ contains only a single point.
\end{thm}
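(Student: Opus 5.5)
Our plan is to follow the standard KL-based finite-length argument, applied to the perturbed Lyapunov sequence $\Phi_k := F(x_k) + A_k$. Here $A_k=\frac{2-q}{2\rho^{\frac{q}{2-q}}}\sum_{i\ge k}\delta_i^{\frac{2}{2-q}}$, exactly as in the proof of Theorem \ref{thm:IRPG-convergence}(2). We know that $\Phi_k$ is nonincreasing and satisfies \eqref{eq:approximate descent}. By Theorem \ref{thm:IRPG-convergence}(2) and Assumption \ref{assump:compact}, $\mathcal{S}$ is nonempty and compact. Moreover, $F$ is constant on $\mathcal{S}$, equal to $F^\ast_\infty:=\lim F(x_k)=\lim \Phi_k$. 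We also have $\mathrm{dist}(x_k,\mathcal{S})\to0$, which is standard since $\{x_k\}\subset\bar\Omega$ is bounded. Lemma \ref{lem:Riemannian-KL} then gives a uniform $\varpi,\varepsilon,\varrho$. Because $\delta_k>0$, we have $A_k>0$ for all $k$, so $\Phi_k>F^\ast_\infty$ strictly. This lets us work with $\varrho(\Phi_k-F^\ast_\infty)$ without the usual case split for a finitely terminating sequence.

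The key steps are as follows.

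\textbf{Step 1: relative error bound.} From the optimality condition \eqref{eq:eq1}, $v_{k+1}:=\grad f(x_{k+1}) - \mathcal{T}^{-\sharp}_{\eta^*_{x_k}}(g_k+\frac1{t_k}\eta^*_{x_k})$ lies in $\partial F(x_{k+1})$. We split
\[
v_{k+1} = \Big[\grad f(x_{k+1}) - \mathcal{T}^{-\sharp}_{\eta^*_{x_k}}\big(\grad f(x_k)+\tfrac1{t_k}\eta^*_{x_k}\big)\Big] + \mathcal{T}^{-\sharp}_{\eta^*_{x_k}}\big(\grad f(x_k)-g_k\big).
\]
We bound the first bracket by Lemma \ref{lem:Huang-estimate} with $a=1/t_k$, where $a\le 1/t_{\min}$ and $\|\eta^*_{x_k}\|\to0$. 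We bound the second term by Assumption \ref{assump:bound} together with the uniform boundedness of $\mathcal{T}^{-\sharp}$ near the zero section on the compact set. This yields $\mathrm{dist}(0,\partial F(x_{k+1}))\le c_1\|\eta^*_{x_k}\| + c_2\delta_k^{\frac1{2-q}}$.

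\textbf{Step 2: KL on the perturbed value.} We use the concavity of $\varrho$ and \eqref{eq:approximate descent}:
\[
\varrho(\Phi_k-F^\ast_\infty)-\varrho(\Phi_{k+1}-F^\ast_\infty)\ge \varrho'(\Phi_k-F^\ast_\infty)\,\tfrac{1}{4t_k}\|\eta^*_{x_k}\|^2 .
\]
To lower-bound $\varrho'(\Phi_k - F^\ast_\infty)$, we use that $\varrho'$ is nonincreasing (by concavity) together with Assumption \ref{assump:desingularizing-function}. This gives $[\varrho'(\Phi_k-F^\ast_\infty)]^{-1}\le C_\varrho\big([\varrho'(|F(x_k)-F^\ast_\infty|)]^{-1}+[\varrho'(A_k)]^{-1}\big)$. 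We then apply the uniform KL inequality at $x_k$ for large $k$, which requires $|F(x_k)-F^\ast_\infty|$ to be small and $\mathrm{dist}(x_k,\mathcal{S})<\varpi$; if $F(x_k)=F^\ast_\infty$ the first term is dropped. The result is $[\varrho'(\Phi_k-F^\ast_\infty)]^{-1}\le C_\varrho\big(c_1\|\eta^*_{x_{k-1}}\|+c_2\delta_{k-1}^{\frac1{2-q}}+[\varrho'(A_k)]^{-1}\big)=:C_\varrho B_k$.

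\textbf{Step 3: summability.} Writing $\Delta_k:=\varrho(\Phi_k-F^\ast_\infty)-\varrho(\Phi_{k+1}-F^\ast_\infty)$, we obtain $\|\eta^*_{x_k}\|^2\le 4t_{\max}C_\varrho\Delta_k B_k$, where $t_k$ is bounded above by Theorem \ref{thm:IRPG-decrease}. The inequality $2\sqrt{ab}\le a/\gamma+\gamma b$ then gives
\[
\|\eta^*_{x_k}\|\le \tfrac12\|\eta^*_{x_{k-1}}\|+c\,\Delta_k + c'\big(\delta_{k-1}^{\frac1{2-q}}+[\varrho'(A_k)]^{-1}\big),
\]
with $\gamma$ chosen small enough that the coefficient of $\|\eta^*_{x_{k-1}}\|$ is at most $1/2$. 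Summing over $k$, the $\Delta_k$ telescope and the $[\varrho'(A_k)]^{-1}$ terms are summable by \eqref{eq:thm-KL-condition}. Hence $\sum_k\|\eta^*_{x_k}\|<\infty$, and Lemma \ref{lem:Huang-Retr-estimate} gives $\sum_k\mathrm{dist}(x_k,x_{k+1})<\infty$. Thus $\{x_k\}$ is Cauchy, converges, and $\mathcal{S}$ is a singleton.

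\textbf{Main obstacle.} The delicate point is the term $\sum_k\delta_k^{\frac1{2-q}}$. It is not implied by $\sum\delta_k^{\frac2{2-q}}<\infty$, so it must be absorbed using \eqref{eq:thm-KL-condition}. My first attempt is to note that $\varrho$ is concave with $\varrho(0)=0$, so $[\varrho'(s)]^{-1}\ge s/\varrho(s)$. Since $\varrho(s)\to 0$, this gives $[\varrho'(A_{k})]^{-1}\gtrsim A_{k}\ge A_k-A_{k+1}\propto\delta_k^{\frac2{2-q}}$, but that only controls $\delta_k^{\frac2{2-q}}$. A better route avoids producing $\delta_k^{\frac1{2-q}}$ at all. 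Since $\Phi_k-F^\ast_\infty\ge A_k$, one can bound the error term directly inside $\Delta_k$-type quantities via Cauchy--Schwarz: $\varrho'(\Phi_k-F^\ast_\infty)\,\delta_{k-1}^{\frac1{2-q}}\,\|\eta\|$-type cross terms can be handled by pairing $\delta_{k-1}^{\frac1{2-q}}$ with $\|\eta^*_{x_k}\|$ rather than summing it alone. If this still fails, I would use the bound $\delta_{k-1}^{\frac1{2-q}}\lesssim\sqrt{A_{k-1}-A_k}\le\sqrt{A_{k-1}}$ together with $[\varrho'(A_{k-1})]^{-1}\gtrsim\sqrt{A_{k-1}}$. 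This holds for KL exponents at least $1/2$ and reduces the term to the summable quantity in \eqref{eq:thm-KL-condition}. Checking that this reduction holds for the general desingularizing function is the step I expect to need the most care.
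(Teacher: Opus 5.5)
\textbf{Your overall architecture matches the paper's proof.} Both use the Lyapunov sequence $F(x_k)+A_k$. Both bound the subgradient at $x_{k+1}$ from the optimality condition, using Lemma \ref{lem:Huang-estimate} and a uniform bound on $\|\mathcal{T}^{-\sharp}\|$. Both apply the quasi-additivity split of $[\varrho'(\cdot)]^{-1}$, then AM--GM and telescoping.

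\textbf{The gap is the one you flag yourself.} Step~3 needs $\sum_k\delta_k^{\frac{1}{2-q}}<\infty$, and you do not establish it. Your fallback fails as written. For $\varrho(s)=cs^{1-\theta}$ one has $[\varrho'(s)]^{-1}=s^{\theta}/(c(1-\theta))$, which dominates $\sqrt{s}$ near $0$ only when $\theta\le\frac12$, not when $\theta\ge\frac12$. Beyond that, bounding $\sqrt{A_{k-1}-A_k}$ by $\sqrt{A_{k-1}}$ discards exactly the information you need. The ``pairing with $\|\eta^*_{x_k}\|$'' idea does not remove the additive error term either.

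The paper's proof avoids this obstacle only because it writes $\|g_k-\grad f(x_k)\|\le C_q\delta_k^{\frac{2}{2-q}}$. That is not what Assumption \ref{assump:bound} gives, which is exponent $\frac{1}{2-q}$. With the misquoted exponent, the extra sum is finite by the hypothesis of Theorem \ref{thm:IRPG-convergence}. Your Step~1 bound is the correct one, so the obstacle is real.

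\textbf{The missing idea.} Keep the increment $A_k-A_{k+1}$ and pair it with $\varrho'(A_k)$, rather than bounding it by $A_k$. Since $\delta_k>0$, for large $k$ we have $0<A_{k+1}<A_k<\varepsilon$. Concavity then gives $\varrho'(A_k)(A_k-A_{k+1})\le\varrho(A_k)-\varrho(A_{k+1})$. By AM--GM,
\[
\sqrt{\tfrac{2-q}{2\rho^{q/(2-q)}}}\;\delta_k^{\frac{1}{2-q}}=\sqrt{A_k-A_{k+1}}\le\tfrac12\Big(\varrho'(A_k)(A_k-A_{k+1})+[\varrho'(A_k)]^{-1}\Big).
\]
Summing over $k\ge k_0$ bounds $\sum_k\delta_k^{\frac{1}{2-q}}$ by a multiple of $\varrho(A_{k_0})+\sum_k[\varrho'(A_k)]^{-1}$. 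This is finite by telescoping, $\varrho\ge0$, and \eqref{eq:thm-KL-condition}. The argument works for every concave desingularizer; it only needs $A_k>0$, which $\delta_k>0$ guarantees. It completes your Step~3, and also the paper's argument once its exponent is corrected.

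\textbf{One minor repair.} $\Phi_k>F^\ast_\infty$ does not follow from $A_k>0$, since $F(x_k)$ may lie below $F^\ast_\infty$. Instead, $\Phi_k\ge F^\ast_\infty$ holds because $\{\Phi_k\}$ is nonincreasing with limit $F^\ast_\infty$. If equality holds, the approximate descent inequality forces $\eta^*_{x_n}=0$ for all $n\ge k$, so the iterates are eventually constant.
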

\begin{proof}
If there exists $l_0\geq 0$ such that $\eta_{x_{l_0}}^*=0$,
then $x_k\equiv x_{l_0}$ for all $k\ge l_0$, in which case the theorem holds trivially.
Therefore, in what follows we assume that $\eta_{x_k}^*\neq 0$ for all $k\ge0$.

Note that the global convergence result in Theorem \ref{thm:IRPG-convergence} implies that
every point in $\mathcal{S}$ is a stationary point.
Since $\lim_{k\to\infty}\|\eta_{x_k}^*\|=0$,
there exists $\delta_T>0$ such that $\|\eta_{x_k}^*\|\le\delta_T$ for all $k\ge0$.
Thus, the application of Lemma \ref{lem:Huang-Retr-estimate} implies that
\begin{equation}\label{eq:dist}
\mathrm{dist}(x_k, x_{k+1}) = \mathrm{dist}(x_k, R_{x_k}(\eta_{x_k}^*)) \leq \kappa_0 \|\eta_{x_k}^*\| \to 0.
\end{equation}
Then, by \cite[Remark 5]{Bolte-2014}, it follows that $\mathcal{S}$ is a compact set.
Moreover, since the sequence $\{F(x_k)\}$ is convergent, $F$ takes
the same value at every point of $\mathcal{S}$. Therefore, by
Lemma \ref{lem:Riemannian-KL}, there exists a common desingularizing function,
denoted $\varrho$, for the Riemannian KL property of $F$ to hold
at every point of $\mathcal{S}$.

In what follows, without loss of generality, we assume that $F(x_k)\neq F(\bar{x})$ for all $k\ge0$.
Since $F(x_k)\to F(\bar{x})$ and $\operatorname{dist}(x_k, \mathcal{S})\to 0$, by the
Riemannian KL property of $F$ on $\mathcal{S}$, there exists
$l_1>0$ such that
\begin{equation}\label{eq:KL-estimate}
\varrho'(|F(x_k)-F(\bar{x})|) \operatorname{dist}(0, \partial F(x_k)) \geq 1 \quad \text{for all } k\geq l_1.
\end{equation}

Let $g_k=g_{\delta_k, L_k, q}(x_k)$.
By Assumption \ref{assump:bound}, there exists a constant $C_q\ge0$ such that
\begin{equation}\label{}
\|g_k-\grad f(x_k)\|\le C_q\, \delta_k^{\frac{2}{2-q}}.
\end{equation}

Since $\lim_{k\to\infty}\|\eta_{x_k}^*\|=0$,
there exists $l_2>0$ such that $\|\eta_{x_k}^*\|\le \mu$ for all $k\ge l_2$, where
$\mu>0$ is the constant from Lemma \ref{lem:Huang-estimate}.
Note also that $\frac{1}{t_k} \leq \frac{1}{t_{\min}}$.
Then, by Lemma \ref{lem:Huang-estimate}, there exists a constant $L_c>0$ such that
\begin{equation*}
  \big\|\grad f(x_{k+1})- \mathcal{T}_{\eta_{x_{k}}^*}^{-\sharp} \big( \grad f(x_{k}) + \frac{1}{t_{k}} \eta_{x_{k}}^* \big)\big\|
  \le L_c\big\|\eta_{x_{k}}^*\big\|.
\end{equation*}

Since $\mathcal{T}_{\eta}^{-\sharp}$ is smooth with respect to $\eta$
and the set $\{\eta\in T_x\mathcal{M}:x\in\bar{\Omega},~\|\eta\|\le \mu\}$ is compact,
there exists a constant $L_t>0$ such that
\begin{equation*}
  \big\|\mathcal{T}_{\eta_{x_{k}}^*}^{-\sharp}\big\|\le L_t \quad \text{for all} \ k\ge l_2.
\end{equation*}

Therefore, we have
\begin{equation*}
\begin{aligned}
&~~~~\big\|\grad f(x_{k+1})- \mathcal{T}_{\eta_{x_{k}}^*}^{-\sharp} \big( g_k + \frac{1}{t_{k}} \eta_{x_{k}}^* \big)\big\| \\
&\le \big\|\grad f(x_{k+1})- \mathcal{T}_{\eta_{x_{k}}^*}^{-\sharp} \big( \grad f(x_{k}) + \frac{1}{t_{k}} \eta_{x_{k}}^* \big)\big\|
+ \big\|\mathcal{T}_{\eta_{x_{k}}^*}^{-\sharp}\big\|\cdot\big\|g_k-\grad f(x_k)\big\| \\
&\le L_c\big\|\eta_{x_{k}}^*\big\| + L_tC_q \delta_k^{\frac{2}{2-q}}.
\end{aligned}
\end{equation*}

From the optimality condition for the subproblem in Algorithm \ref{alg:algorithm-IRPG}, it follows that
$- \mathcal{T}_{\eta_{x_{k}}^*}^{-\sharp} \big( g_k + \frac{1}{t_{k}} \eta_{x_{k}}^* \big)\in \partial h(x_{k+1})$.
Thus
\begin{equation*}
  \grad f(x_{k+1})- \mathcal{T}_{\eta_{x_{k}}^*}^{-\sharp} \big( g_k + \frac{1}{t_{k}} \eta_{x_{k}}^* \big) \in \partial F(x_{k+1}).
\end{equation*}
Therefore, we obtain
\begin{equation}\label{}
  \operatorname{dist}(0, \partial F(x_k))\le L_c\|\eta_{x_{k-1}}^*\| + L_tC_q \delta_{k-1}^{\frac{2}{2-q}}.
\end{equation}
Inserting this inequality into \eqref{eq:KL-estimate} yields
\begin{equation}\label{eq:KL-estimate-2}
  \varrho'(|F(x_k)-F(\bar{x})|)\ge \frac{1}{L_c\|\eta_{x_{k-1}}^*\| + L_tC_q \delta_{k-1}^{\frac{2}{2-q}}}
  \quad \text{for all}\ k\ge k_0:=\max\{l_1,l_2\}.
\end{equation}

We now consider the first case in Theorem \ref{thm:IRPG-decrease}. The other case can be handled similarly.
Define $A_k:=\frac{2-q}{2\rho^{\frac{q}{2-q}}}\sum_{i=k}^\infty \delta_i^{\frac{2}{2-q}}$.
Then \eqref{eq:decrease-1} can be rewritten as
\begin{equation}\label{eq:decrease-rewritten}
F(x_{k+1})+A_{k+1} \leq F(x_k) + A_k - \frac{1}{4t_k} \|\eta_{x_k}^*\|^2,
\end{equation}
which, together with the fact that $\eta_{x_k}^*\neq 0$,
implies that the sequence $\{F(x_k) + A_k\}$ is strictly decreasing and converges to $F(\bar{x})$.
Hence $F(x_k)-F(\bar{x})+A_k>0$ for all $k\ge0$.
Define $u_k:=\varrho(F(x_k)-F(\bar{x})+A_k)$.
Then
\begin{equation*}
\begin{aligned}
u_k-u_{k+1} &\ge \varrho'(F(x_k)-F(\bar{x})+A_k)(F(x_k)+A_k - F(x_{k+1})-A_{k+1})  \\
&\ge \varrho'(|F(x_k)-F(\bar{x})|+A_k)(F(x_k)+A_k - F(x_{k+1})-A_{k+1}) \\
&\ge \frac{1}{C_\varrho}\cdot \frac{1}{[\varrho'(|F(x_k)-F(\bar{x})|)]^{-1}+[\varrho'(A_k)]^{-1}}\cdot \frac{1}{4t_k} \|\eta_{x_k}^*\|^2 \\
&\ge \frac{1}{C_\varrho}\cdot \frac{1}{L_c\|\eta_{x_{k-1}}^*\| + L_tC_q \delta_{k-1}^{\frac{2}{2-q}}
+[\varrho'(A_k)]^{-1}}\cdot \frac{q\rho}{2} \|\eta_{x_k}^*\|^2,
\end{aligned}
\end{equation*}
where the first inequality follows from the concavity of $\varrho$,
the second inequality follows from
the fact that $\varrho'$ is monotonically decreasing (since $\varrho$ is concave),
the third inequality follows from \eqref{eq:decrease-rewritten} and Assumption \ref{assump:desingularizing-function},
and the last inequality follows from \eqref{eq:KL-estimate-2} and the fact that $\frac{1}{t_k}\ge 2(L_k+q\rho)\ge 2q\rho$.

Let $M:=\frac{q\rho}{2C_\varrho\max\{L_c,L_tC_q,1\}}$. We have
\begin{equation}\label{}
  u_k-u_{k+1} \ge \frac{M\|\eta_{x_k}^*\|^2}{\|\eta_{x_{k-1}}^*\| +  \delta_{k-1}^{\frac{2}{2-q}}
+[\varrho'(A_k)]^{-1}}.
\end{equation}
Then
\begin{equation*}
  \|\eta_{x_k}^*\|^2\le \frac{1}{M}(u_k-u_{k+1})\big(\|\eta_{x_{k-1}}^*\| +  \delta_{k-1}^{\frac{2}{2-q}}
+[\varrho'(A_k)]^{-1}\big).
\end{equation*}
Taking the square root of both sides of the above inequality and noting $2\sqrt{ab}\le a+b$ for all $a,b\ge0$, we obtain
\begin{equation*}
  2\|\eta_{x_k}^*\| \le \frac{1}{M}(u_k-u_{k+1}) + \|\eta_{x_{k-1}}^*\| +  \delta_{k-1}^{\frac{2}{2-q}}
+[\varrho'(A_k)]^{-1}.
\end{equation*}
Summing both sides of the above inequality over $k=k_0,k_0+1,\cdots$ yields
\begin{equation*}
\begin{aligned}
2\sum_{k=k_0}^\infty\|\eta_{x_k}^*\| &\le \frac{1}{M}\sum_{k=k_0}^\infty(u_k-u_{k+1}) + \sum_{k=k_0}^\infty\|\eta_{x_{k-1}}^*\| +  \sum_{k=k_0}^\infty\delta_{k-1}^{\frac{2}{2-q}}
+\sum_{k=k_0}^\infty[\varrho'(A_k)]^{-1} \\
&\le \frac{1}{M}u_{k_0} + \|\eta_{x_{k_0-1}}^*\| + \sum_{k=k_0}^\infty\|\eta_{x_k}^*\|+  \sum_{k=k_0}^\infty\delta_{k-1}^{\frac{2}{2-q}}
+\sum_{k=k_0}^\infty[\varrho'(A_k)]^{-1},
\end{aligned}
\end{equation*}
which implies that
\begin{equation}\label{eq:recur}
  \sum_{k=k_0}^\infty\|\eta_{x_k}^*\| \le \frac{1}{M}u_{k_0} + \|\eta_{x_{k_0-1}}^*\| +  \sum_{k=k_0}^\infty\delta_{k-1}^{\frac{2}{2-q}}
+\sum_{k=k_0}^\infty[\varrho'(A_k)]^{-1} < \infty.
\end{equation}
Therefore, $\sum_{k=0}^\infty\|\eta_{x_k}^*\|< \infty$.
Inserting this into \eqref{eq:dist} yields
\begin{equation*}
  \sum_{k=0}^\infty \mathrm{dist}(x_k,x_{k+1}) \le \kappa_0 \sum_{k=0}^\infty\|\eta_{x_k}^*\|< \infty,
\end{equation*}
which implies that $\{x_k\}$ is a Cauchy sequence and hence converges.
The proof is complete.
\end{proof}

\noindent
\textbf{Remark:} We give a specific choice of $\delta_k$ that satisfies condition \eqref{eq:thm-KL-condition}.
  Suppose the desingularizing function is a {\L}ojasiewicz function, i.e., $\varrho(x)=cx^{1-\theta}$,
  where $c>0$ and $\theta\in (0,1)$.
  Then $(\varrho'(x))^{-1}=\frac{1}{c(1-\theta)}x^{\theta}$.
  We set $\delta_k=\frac{1}{(k+2)^p}$, where $p>\frac{2-q}{2}(1+1/\theta)$.
  Let $r:=\frac{2p}{2-q}$. Then $r>1+1/\theta$. For all $k\ge0$, we have
  \begin{equation*}
    \sum_{i=k}^\infty \delta_i^{\frac{2}{2-q}} = \sum_{i=k}^\infty\frac{1}{(i+2)^r}
    \le \int_{k+1}^\infty \frac{1}{x^r}dx = \frac{1}{(r-1)(k+1)^{r-1}}.
  \end{equation*}
  Therefore, we obtain the stated condition
  \begin{equation*}
    \sum_{k=0}^\infty\left(\varrho'\big(\frac{2-q}{2\rho^{\frac{q}{2-q}}}\sum_{i=k}^\infty \delta_i^{\frac{2}{2-q}}\big)\right)^{-1}
    \le \frac{1}{c(1-\theta)}\left(\frac{2-q}{2(r-1)\rho^{\frac{q}{2-q}}}\right)^{\theta}\sum_{k=0}^\infty\frac{1}{(k+1)^{(r-1)\theta}}
    < \infty.
  \end{equation*}

\subsection{Convergence rate under the Riemannian Kurdyka--{\L}ojasiewicz property}

To establish the convergence rate under the Riemannian KL property, we first prove a lemma
which states that a recursive inequality between $D_k$ and $D_{k-1}$ yields the decay rate of $D_k$.

\begin{lem}\label{lem:rate}
Let $\{D_k\}$ be a monotonically decreasing sequence converging to $0$.
If there exists $k_0\in\bn$ such that for all $k\ge k_0$ there holds
\begin{equation}\label{eq:recurrence}
  D_k \le (D_{k-1}-D_k) + C_1(D_{k-1}-D_k)^\alpha + \frac{C_2}{(k+1)^\beta},
\end{equation}
where $\alpha,\beta,C_1,C_2$ are positive constants, then
\begin{equation*}
D_k =
\begin{cases}
\mathcal{O}(k^{-\beta}) & \text{if }\ \alpha\ge 1, \\
\mathcal{O}(k^{-\min\{\frac{\alpha}{1-\alpha},\,\beta\}}) & \text{if }\ 0<\alpha<1.
\end{cases}
\end{equation*}
\end{lem}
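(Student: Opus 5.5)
We prove Lemma \ref{lem:rate} by reducing the recurrence \eqref{eq:recurrence} to a standard one and then comparing $D_k$ with the target rate. Write $\Delta_k := D_{k-1}-D_k \ge 0$. Since $D_k\to 0$, we have $\Delta_k\to 0$, so for large $k$ we have $\Delta_k\le 1$.

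\textbf{Case $\alpha\ge 1$.} For $\Delta_k\le 1$ we get $\Delta_k^\alpha\le \Delta_k$, so the recurrence becomes
\[
D_k\le (1+C_1)(D_{k-1}-D_k)+C_2(k+1)^{-\beta}.
\]
Setting $c:=1+C_1$ and rearranging gives
\[
D_k\le \tfrac{c}{1+c}D_{k-1}+\tfrac{C_2}{1+c}(k+1)^{-\beta},
\]
a linear contraction with factor $\lambda=c/(1+c)<1$ plus a polynomially decaying forcing term. Unrolling,
\[
D_k\le \lambda^{k-k_1}D_{k_1}+\sum_{j}\lambda^{k-j}C'(j+1)^{-\beta}.
\]
To bound the sum, split it at $j=k/2$. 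The terms with $j<k/2$ total at most $\lambda^{k/2}\cdot\mathrm{const}$, which is geometrically small. The terms with $j\ge k/2$ total at most $(k/2)^{-\beta}/(1-\lambda)$. Together these give $D_k=\mathcal{O}(k^{-\beta})$.

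\textbf{Case $0<\alpha<1$.} Now for $\Delta_k\le1$ we have $\Delta_k\le\Delta_k^\alpha$, so
\[
D_k\le (1+C_1)\Delta_k^\alpha + C_2(k+1)^{-\beta}.
\]
Let $\gamma:=\min\{\alpha/(1-\alpha),\beta\}$ and fix a target $D_k\le K(k+1)^{-\gamma}$ with $K$ large. Consider the indices where $D_k\ge 2C_2(k+1)^{-\beta}$. At these, $D_k\le 2(1+C_1)\Delta_k^\alpha$, so
\[
D_{k-1}-D_k\ge c\,D_k^{1/\alpha},\qquad c=(2(1+C_1))^{-1/\alpha}.
\]
This is the classical KL-type recurrence. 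Apply the standard trick with $\phi(s)=s^{1-1/\alpha}$ (note $1/\alpha>1$). By the mean value theorem, and using $D_k\le D_{k-1}$, either $D_k^{1-1/\alpha}-D_{k-1}^{1-1/\alpha}\ge c'$, or $D_k\ge D_{k-1}/2$ leads again to that bound. The alternative $D_k<D_{k-1}/2$ gives the growth of $D_k^{1-1/\alpha}$ by a factor, which also yields a constant increment once $D$ is small. So on such indices $D_k^{-(1-\alpha)/\alpha}$ increases by at least a fixed constant.

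\textbf{Main obstacle.} The difficulty is combining the two regimes, since indices where the forcing term dominates interrupt the telescoping. I will handle this by an induction proof of $D_k\le K(k+1)^{-\gamma}$. Suppose the bound holds at $k-1$ and fails at $k$. Then $D_k>K(k+1)^{-\gamma}\ge 2C_2(k+1)^{-\beta}$ for $K\ge 2C_2$ (using $\gamma\le\beta$), so we are in the first regime. There
\[
D_{k-1}\ge D_k+cD_k^{1/\alpha}>K(k+1)^{-\gamma}+cK^{1/\alpha}(k+1)^{-\gamma/\alpha}.
\]
Compare this with $K k^{-\gamma}\le K(k+1)^{-\gamma}+K\gamma 2^{\gamma+1}(k+1)^{-\gamma-1}$. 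Because $\gamma/\alpha\le\gamma+1$, which follows from $\gamma\le\alpha/(1-\alpha)$, the term $cK^{1/\alpha}(k+1)^{-\gamma/\alpha}$ dominates $K\gamma2^{\gamma+1}(k+1)^{-\gamma-1}$ once $K$ is large, since $K^{1/\alpha}\gg K$. This contradicts the induction hypothesis. The base case at $k_1$ is secured by enlarging $K$, as $D$ is bounded. This proves $D_k=\mathcal{O}(k^{-\gamma})$.

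The same induction template also handles the case $\alpha\ge 1$ with $\gamma=\beta$, replacing the contraction estimate there.
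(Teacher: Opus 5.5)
Your proof is correct. For $0<\alpha<1$ it is the paper's argument: an induction on $D_k\le K(k+1)^{-\gamma}$, where a failure at step $k$ forces $D_{k-1}\ge D_k+cD_k^{1/\alpha}$. This contradicts the hypothesis, because $k^{-\gamma}-(k+1)^{-\gamma}=\mathcal{O}((k+1)^{-\gamma-1})$ while $K^{1/\alpha}$ outgrows $K$.

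Your bookkeeping is cleaner in two ways. You absorb the forcing term through the dichotomy $D_k\ge 2C_2(k+1)^{-\beta}$ instead of carrying $(M-C_2)^{1/\alpha}$. You also take $\gamma=\min\{\alpha/(1-\alpha),\beta\}$ and observe that $\gamma/\alpha\le\gamma+1$ is equivalent to $\gamma\le\alpha/(1-\alpha)$. This treats the paper's Cases 3 and 4 together, whereas the paper separates them and introduces $\sigma=\beta/\tau$ for the second.

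For $\alpha\ge1$ your route genuinely differs. You unroll the contraction $D_k\le\lambda D_{k-1}+C'(k+1)^{-\beta}$ and split the convolution sum at $j=k/2$. This gives an explicit bound: a geometric transient plus $C'(2/k)^{\beta}/(1-\lambda)$. The paper instead runs a second induction, which needs an index $k_1$ with $1-\lambda\bigl(\tfrac{k+1}{k}\bigr)^{\beta}\ge\tfrac{1-\lambda}{2}$; its contraction factor is your $\lambda$. Your use of $\Delta_k^\alpha\le\Delta_k$ on $[0,1]$ also avoids the paper's quotient $(D_{k-1}-D_k)^\alpha/(D_{k-1}-D_k)$, which is $0/0$ whenever $D_{k-1}=D_k$.

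Two cosmetic points. First, the paragraph with $\phi(s)=s^{1-1/\alpha}$ is never used and can be deleted. Second, your closing claim that the same induction template handles $\alpha\ge1$ is not literally right. For $\alpha=1$ the gain $K^{1/\alpha}\gg K$ disappears, so the contradiction instead requires starting the induction at an index beyond which $\bigl(\tfrac{k+1}{k}\bigr)^{\beta}\le 1+\tfrac{1}{2(1+C_1)}$. That is exactly the mechanism of the paper's Case 1. Since your unrolling already settles $\alpha\ge1$, nothing is lost.
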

\begin{proof}
We prove this lemma by considering four distinct cases.

Case 1: $\alpha=1$.

Inequality \eqref{eq:recurrence} can be rewritten as
\begin{equation}\label{eq:recurrence-case-1}
  D_k \le \gamma D_{k-1} + \frac{C'}{(k+1)^\beta},
\end{equation}
where $\gamma:=\frac{1+C_1}{2+C_1}\in (0,1)$ and $C'=\frac{C_2}{2+C_1}>0$.

Since $1-\gamma(\frac{k+1}{k})^\beta \to 1-\gamma$ as $k\to\infty$,
there exists $k_1\ge k_0$ such that for all $k\ge k_1$, there holds $1-\gamma(\frac{k+1}{k})^\beta\ge\frac{1-\gamma}{2}$.
We define
\begin{equation}\label{}
  M:=\max\left\{\frac{2C'}{1-\gamma},(k_1+1)^\beta D_{k_1}\right\}.
\end{equation}
We now prove by induction that $D_k\le \frac{M}{(k+1)^\beta}$ for all $k\ge k_1$.
When $k=k_1$, the conclusion follows immediately from the definition of $M$.
We assume that $D_{k-1}\le \frac{M}{k^\beta}$.
Since $M\ge \frac{2C'}{1-\gamma}$ and $1-\gamma(\frac{k+1}{k})^\beta\ge\frac{1-\gamma}{2}$,
we have $M[1-\gamma(\frac{k+1}{k})^\beta]\ge C'$,
which is equivalent to $\gamma\frac{M}{k^\beta}+\frac{C'}{(k+1)^\beta}\le \frac{M}{(k+1)^\beta}$.
Combining this inequality with \eqref{eq:recurrence-case-1} and the induction hypothesis $D_{k-1}\le \frac{M}{k^\beta}$, we obtain
\begin{equation*}
  D_k \le \gamma D_{k-1} + \frac{C'}{(k+1)^\beta}\le \gamma\frac{M}{k^\beta}+\frac{C'}{(k+1)^\beta}\le \frac{M}{(k+1)^\beta}.
\end{equation*}

Case 2: $\alpha>1$.

Since $(D_{k-1}-D_k)^\alpha/(D_{k-1}-D_k)=(D_{k-1}-D_k)^{\alpha-1}\to 0$ as $k\to \infty$,
the sequence $\{(D_{k-1}-D_k)^\alpha/(D_{k-1}-D_k)\}$ is bounded,
that is, there exists $M_1>0$ such that
$(D_{k-1}-D_k)^\alpha\le M_1 (D_{k-1}-D_k)$.
Substituting this inequality into \eqref{eq:recurrence}, we reduce the problem to Case 1.

Case 3: $0<\alpha<1$ and $\tau:=\frac{\alpha}{1-\alpha}\le \beta$.

Using \eqref{eq:recurrence} and the fact that $\tau\le \beta$, we obtain
\begin{equation*}
  D_k \le (D_{k-1}-D_k) + C_1(D_{k-1}-D_k)^\alpha + \frac{C_2}{(k+1)^\tau}.
\end{equation*}
Since $(D_{k-1}-D_k)/(D_{k-1}-D_k)^\alpha=(D_{k-1}-D_k)^{1-\alpha}\to 0$ as $k\to \infty$,
the sequence $\{(D_{k-1}-D_k)/(D_{k-1}-D_k)^\alpha\}$ is bounded,
that is, there exists $M_2>0$ such that
$(D_{k-1}-D_k)\le M_2 (D_{k-1}-D_k)^\alpha$. Thus we have
\begin{equation}\label{eq:case-3-estimate}
  D_k - \frac{C_2}{(k+1)^\tau} \le  C_3(D_{k-1}-D_k)^\alpha,~~\text{where}~~C_3:=M_2+C_1.
\end{equation}

We take $M>0$ sufficiently large such that
\begin{equation}\label{}
  M\ge (k_0+1)^\tau D_{k_0},~~M\ge C_2,~~(M-C_2)^{1/\alpha}\ge 2C_3^{1/\alpha} C_\tau M,
\end{equation}
where $C_\tau:=\tau\cdot\max\{1,2^{\tau-1}\}$.
We now prove by induction that $D_k\le \frac{M}{(k+1)^\tau}$ for all $k\ge k_0$.
When $k=k_0$, the conclusion follows immediately from the definition of $M$.
We assume that $D_{k-1}\le \frac{M}{k^\tau}$.
We now prove that $D_k\le \frac{M}{(k+1)^\tau}$.

If $D_k \le \frac{C_2}{(k+1)^\tau}$, the conclusion follows immediately from the definition of $M$.

If $D_k > \frac{C_2}{(k+1)^\tau}$, from \eqref{eq:case-3-estimate} and the induction hypothesis $D_{k-1}\le \frac{M}{k^\tau}$, we obtain
\begin{equation*}
  \frac{1}{C_3^{1/\alpha}}\left(D_k - \frac{C_2}{(k+1)^\tau}\right)^{1/\alpha}
  \le D_{k-1}-D_k \le \frac{M}{k^\tau} - D_k.
\end{equation*}
Multiplying both sides of this inequality by $(k+1)^\tau$, we obtain
\begin{equation}\label{eq:case-3-estimate2}
  \frac{1}{C_3^{1/\alpha}}(D_k(k+1)^\tau - C_2)^{1/\alpha}(k+1)^{\tau-\frac{\tau}{\alpha}}
  \le M\left(1+\frac{1}{k}\right)^\tau - D_k(k+1)^\tau.
\end{equation}
Since $\tau-\frac{\tau}{\alpha}=-1$, the above inequality can be rewritten as
\begin{equation*}
  D_k(k+1)^\tau + \frac{(D_k(k+1)^\tau - C_2)^{1/\alpha}}{C_3^{1/\alpha}(k+1)} \le M\left(1+\frac{1}{k}\right)^\tau.
\end{equation*}
By the mean value theorem, we readily verify that $\left(1+\frac{1}{k}\right)^\tau\le 1+\frac{C_\tau}{k}$,
where $C_\tau=\tau\cdot\max\{1,2^{\tau-1}\}$.
We proceed by contradiction and suppose that $D_k > \frac{M}{(k+1)^\tau}$. It follows that
\begin{equation*}
  M + \frac{(M - C_2)^{1/\alpha}}{C_3^{1/\alpha}(k+1)}< M + \frac{MC_\tau}{k},
\end{equation*}
which implies
\begin{equation*}
  (M - C_2)^{1/\alpha} < C_3^{1/\alpha} C_\tau M\frac{k+1}{k}\le 2C_3^{1/\alpha} C_\tau M,
\end{equation*}
which contradicts the definition of $M$.

Case 4: $0<\alpha<1$ and $\tau:=\frac{\alpha}{1-\alpha}> \beta$.

The proof is analogous to that of Case 3.
We only highlight the key differences.
By a similar derivation, the inequality corresponding to \eqref{eq:case-3-estimate} becomes
\begin{equation}\label{}
  D_k - \frac{C_2}{(k+1)^\beta} \le  C_3(D_{k-1}-D_k)^\alpha.
\end{equation}
We then take $M>0$ sufficiently large such that
\begin{equation}\label{}
  M\ge (k_0+1)^\beta D_{k_0},~~M\ge C_2,~~(M-C_2)^{1/\alpha}\ge 2C_3^{1/\alpha} C_\beta M,
\end{equation}
where $C_\beta:=\beta\cdot\max\{1,2^{\beta-1}\}$.
We now prove by induction that $D_k\le \frac{M}{(k+1)^\beta}$ for all $k\ge k_0$.
When $k=k_0$, the conclusion follows immediately from the definition of $M$.
We assume that $D_{k-1}\le \frac{M}{k^\beta}$.
We now prove that $D_k\le \frac{M}{(k+1)^\beta}$.
If $D_k \le \frac{C_2}{(k+1)^\beta}$, the conclusion follows immediately from the definition of $M$.
If $D_k > \frac{C_2}{(k+1)^\beta}$, we derive an inequality analogous to \eqref{eq:case-3-estimate2} of Case 3 as follows:
\begin{equation}\label{}
  \frac{1}{C_3^{1/\alpha}}(D_k(k+1)^\beta - C_2)^{1/\alpha}(k+1)^{\beta-\frac{\beta}{\alpha}}
  \le M\left(1+\frac{1}{k}\right)^\beta - D_k(k+1)^\beta.
\end{equation}
Let $\sigma := \frac{\beta}{\alpha}-\beta=\frac{\beta}{\tau}\in(0,1)$. Then we have
\begin{equation*}
  D_k(k+1)^\beta + \frac{(D_k(k+1)^\beta - C_2)^{1/\alpha}}{C_3^{1/\alpha}(k+1)^\sigma} \le M\left(1+\frac{1}{k}\right)^\beta.
\end{equation*}
The rest of the proof follows the same derivations as in Case 3.
\end{proof}

The following theorem shows that if $F$ satisfies the Riemannian KL property with
a desingularizing function of the form $\varrho(x)=cx^{1-\theta}$,
where $c>0$ and $\theta\in (0,1)$, then an explicit convergence rate can be derived.

\begin{thm}\label{}
Suppose that the conditions of Theorem \ref{thm:point-convergence} hold.
Assume in addition that the desingularizing function has the form $\varrho(x)=cx^{1-\theta}$,
where $c>0$ and $\theta\in (0,1)$,
and the oracle error has the form $\delta_k=\frac{1}{(k+2)^p}$, where $p>\frac{2-q}{2}(1+1/\theta)$.
Then
\begin{equation*}
\mathrm{dist}(x_k,\bar{x}) =
\begin{cases}
\mathcal{O}(k^{-[(r-1)\theta-1]}) & \text{if}\ 0<\theta\le \frac{1}{2},  \\
\mathcal{O}(k^{-[(r-1)\theta-1]}) & \text{if}\ \frac{1}{2}<\theta<1\ \text{and}\ \frac{\theta+1}{\theta}<r\le \frac{2\theta}{2\theta-1}, \\
\mathcal{O}(k^{-\frac{1-\theta}{2\theta-1}}) & \text{if}\ \frac{1}{2}<\theta<1\ \text{and}\ r> \frac{2\theta}{2\theta-1},
\end{cases}
\end{equation*}
where $\bar{x}$ denotes the unique accumulation point of the sequence $\{x_k\}$, and $r := \frac{2p}{2-q}$.
\end{thm}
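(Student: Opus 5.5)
Throughout we work with the tail sums $D_k:=\sum_{i=k}^\infty\|\eta_{x_i}^*\|$. The reason is that \eqref{eq:dist} and the triangle inequality give $\mathrm{dist}(x_k,\bar{x})\le\kappa_0 D_k$. So the task reduces to showing that $D_k$ satisfies the recursion \eqref{eq:recurrence} of Lemma \ref{lem:rate} with suitable $\alpha,\beta$, and then reading off the rates.

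The starting point is inequality \eqref{eq:recur} from the proof of Theorem \ref{thm:point-convergence}, with the summation started at a general index $k\ge k_0+1$ instead of $k_0$. This gives
\begin{equation*}
D_k\le \tfrac{1}{M}u_k+\|\eta_{x_{k-1}}^*\|+\sum_{i\ge k}\delta_{i-1}^{\frac{2}{2-q}}+\sum_{i\ge k}[\varrho'(A_i)]^{-1}.
\end{equation*}
We then bound each term on the right.
\begin{itemize}
\item By definition $\|\eta_{x_{k-1}}^*\|=D_{k-1}-D_k$.
\item Since $\delta_i=(i+2)^{-p}$, the remark after Theorem \ref{thm:point-convergence} gives $A_i\lesssim (i+1)^{-(r-1)}$ and hence $[\varrho'(A_i)]^{-1}\lesssim (i+1)^{-(r-1)\theta}$. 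The tail of this sum is $\mathcal{O}(k^{-[(r-1)\theta-1]})$; this is positive because $r>1+1/\theta$. The tail $\sum_{i\ge k}\delta_{i-1}^{2/(2-q)}=\mathcal{O}(k^{-(r-1)})$ is of smaller order.
\item For $u_k=c\,(F(x_k)-F(\bar{x})+A_k)^{1-\theta}$ we use the subadditivity of $x\mapsto x^{1-\theta}$. This gives $u_k\le c|F(x_k)-F(\bar{x})|^{1-\theta}+cA_k^{1-\theta}$, and the second piece is $\mathcal{O}(k^{-(r-1)(1-\theta)})$.
\end{itemize}

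The key step is to control $|F(x_k)-F(\bar{x})|^{1-\theta}$ by $D_{k-1}-D_k$ through the KL inequality \eqref{eq:KL-estimate-2}. For $\varrho(x)=cx^{1-\theta}$ it reads
\begin{equation*}
|F(x_k)-F(\bar{x})|^{\theta}\le c(1-\theta)\bigl(L_c\|\eta_{x_{k-1}}^*\|+L_tC_q\delta_{k-1}^{\frac{2}{2-q}}\bigr).
\end{equation*}
Raising to the power $(1-\theta)/\theta$ and using $(a+b)^s\le C(a^s+b^s)$, we get
\begin{equation*}
|F(x_k)-F(\bar{x})|^{1-\theta}\lesssim (D_{k-1}-D_k)^{\frac{1-\theta}{\theta}}+k^{-r\frac{1-\theta}{\theta}}.
\end{equation*}
Collecting the pieces yields \eqref{eq:recurrence} with $\alpha=\frac{1-\theta}{\theta}$ and $\beta=\min\{(r-1)\theta-1,\,(r-1)(1-\theta),\,r\frac{1-\theta}{\theta},\,r-1\}$. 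The main obstacle is this bookkeeping of exponents. We must show that $\beta$ equals $(r-1)\theta-1$ in the first two regimes, and that in the third regime the minimum in Lemma \ref{lem:rate} is attained by $\frac{\alpha}{1-\alpha}=\frac{1-\theta}{2\theta-1}$. Both reduce to elementary comparisons.
\begin{itemize}
\item Since $\theta<1$, we have $(r-1)\theta-1<r-1$.
\item For $\theta\le\frac12$, $(r-1)\theta-1<(r-1)(1-\theta)$ because $\theta\le 1-\theta$. Similarly $(r-1)\theta-1<r\frac{1-\theta}{\theta}$, since $\theta\le \frac{1-\theta}{\theta}$.
\item For $\theta>\frac12$, one checks $(r-1)\theta-1\le(r-1)(1-\theta)$ and $(r-1)\theta-1\le r\frac{1-\theta}{\theta}$ in the relevant range of $r$. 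If this fails for very large $r$, we would need to sharpen the bound on $u_k$. This is the step I expect to need care, perhaps by bounding $A_k^{1-\theta}$ via $u_k-u_{k+1}$ rather than crudely.
\end{itemize}

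Finally, we apply Lemma \ref{lem:rate}. If $\theta\le\frac12$, then $\alpha\ge1$ and $D_k=\mathcal{O}(k^{-\beta})$. If $\theta>\frac12$, then $0<\alpha<1$, $\frac{\alpha}{1-\alpha}=\frac{1-\theta}{2\theta-1}$, and $D_k=\mathcal{O}(k^{-\min\{\frac{1-\theta}{2\theta-1},\,(r-1)\theta-1\}})$. The comparison $(r-1)\theta-1\le\frac{1-\theta}{2\theta-1}$ is equivalent to $r\le\frac{2\theta}{2\theta-1}$, which produces exactly the three stated cases. The bound $\mathrm{dist}(x_k,\bar x)\le\kappa_0D_k$ then transfers the rate to the iterates.
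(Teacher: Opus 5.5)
Your argument is essentially the paper's. It uses the same tail sums $D_k$, the same starting inequality \eqref{eq:recur} at a general index, the KL estimate \eqref{eq:KL-estimate-2} to turn $u_k$ into a power of $D_{k-1}-D_k$, Lemma \ref{lem:rate} with $\alpha=\frac{1-\theta}{\theta}$, and the bound $\mathrm{dist}(x_k,\bar x)\le\kappa_0D_k$. The only difference is cosmetic. You split $u_k$ by subadditivity of $s\mapsto s^{1-\theta}$ before invoking KL. The paper instead rewrites $(|F(x_k)-F(\bar x)|+A_k)^{1-\theta}$ through $[\varrho'(\cdot)]^{-1}$, uses quasi-additivity with $C_\varrho=1$, and then splits the power. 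Both routes give the same error terms.

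The step you flagged needs no sharper bound on $u_k$, but you should complete it. The inequalities you list for $\theta>\frac12$ do fail once $r>\frac{2\theta}{2\theta-1}$; indeed, $(r-1)\theta-1\le(r-1)(1-\theta)$ is equivalent to $r\le\frac{2\theta}{2\theta-1}$. This is harmless, for two reasons:
\begin{itemize}
\item Both $r\frac{1-\theta}{\theta}$ and $r-1$ always exceed $(r-1)(1-\theta)$, so your $\beta$ equals the paper's $\min\{(r-1)\theta-1,(r-1)(1-\theta)\}$.
\item The inequality $(r-1)(1-\theta)>\frac{1-\theta}{2\theta-1}$ is likewise equivalent to $r>\frac{2\theta}{2\theta-1}$.
\end{itemize}
Hence in the third regime the minimum in Lemma \ref{lem:rate} is $\frac{1-\theta}{2\theta-1}$, whichever term realises $\beta$. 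For $r\le\frac{2\theta}{2\theta-1}$, the exponent $(r-1)\theta-1$ is the smallest of all of them. This is exactly the comparison the paper makes at the end of its proof. With it inserted, your final formula $\min\{\frac{1-\theta}{2\theta-1},(r-1)\theta-1\}$ and the three cases are justified.
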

\begin{proof}
Let $D_k:=\sum_{i=k}^\infty\|\eta_{x_i}^*\|$.
We first prove that the following recursive inequality between $D_k$ and $D_{k-1}$ holds for sufficiently large $k\in\bn$:
\begin{equation}\label{eq:true-recurrence}
  D_k \le (D_{k-1}-D_k) + C_1(D_{k-1}-D_k)^{\frac{1-\theta}{\theta}} + \frac{C_2}{(k+1)^{\beta}},
\end{equation}
where $\beta:=\min\big\{(r-1)(1-\theta),(r-1)\theta-1\big\}$,
$C_1$ and $C_2$ are positive constants.

It follows from \eqref{eq:recur} that for all $k\ge k_0$, we have
\begin{equation}\label{eq:recur-rewritten}
  \sum_{i=k}^\infty\|\eta_{x_i}^*\| \le \frac{1}{M}u_{k} + \|\eta_{x_{k-1}}^*\| +  \sum_{i=k}^\infty\delta_{i-1}^{\frac{2}{2-q}}
+\sum_{i=k}^\infty[\varrho'(A_i)]^{-1},
\end{equation}
where $u_k=c(F(x_k)-F(\bar{x})+A_k)^{1-\theta}$ and $A_k = \frac{2-q}{2\rho^{\frac{q}{2-q}}}\sum_{i=k}^\infty \delta_i^{\frac{2}{2-q}}$.
We deduce that
\begin{equation*}
\begin{aligned}
u_k &\le c(|F(x_k)-F(\bar{x})|+A_k)^{1-\theta} \\
    & = c\big\{c(1-\theta)[\varrho'(|F(x_k)-F(\bar{x})|+A_k)]^{-1}\big\}^{\frac{1-\theta}{\theta}} \\
    &\le c^{\frac{1}{\theta}}(1-\theta)^{\frac{1-\theta}{\theta}}
    \big\{[\varrho'(|F(x_k)-F(\bar{x})|)]^{-1}+[\varrho'(A_k)]^{-1}\big\}^{\frac{1-\theta}{\theta}},
\end{aligned}
\end{equation*}
where the second line follows from the special structure of the desingularizing function $\varrho$,
and the third line follows from the quasi-additivity-type property of $\varrho$ with $C_\varrho=1$.
Then, using \eqref{eq:KL-estimate-2}, we obtain
\begin{equation}\label{eq:lem-estimate}
  u_k \le c^{\frac{1}{\theta}}(1-\theta)^{\frac{1-\theta}{\theta}}
    \big\{L_c\|\eta_{x_{k-1}}^*\| + L_tC_q \delta_{k-1}^{\frac{2}{2-q}}+[\varrho'(A_k)]^{-1}\big\}^{\frac{1-\theta}{\theta}}.
\end{equation}
Next, we prove \eqref{eq:true-recurrence} by considering two cases.

Case 1: $\theta \in [\frac{1}{2},1)$.

We have $0<\frac{1-\theta}{\theta}\le 1$.
By \eqref{eq:lem-estimate} and the inequality
$(a+b+c)^{\frac{1-\theta}{\theta}}\le a^{\frac{1-\theta}{\theta}} + b^{\frac{1-\theta}{\theta}} +c^{\frac{1-\theta}{\theta}}$ which holds for all $a,b,c\ge0$, we obtain
\begin{equation}\label{}
  u_k \le c^{\frac{1}{\theta}}(1-\theta)^{\frac{1-\theta}{\theta}}
    \big\{L_c^{\frac{1-\theta}{\theta}}\|\eta_{x_{k-1}}^*\|^{\frac{1-\theta}{\theta}} + (L_tC_q)^{\frac{1-\theta}{\theta}} \delta_{k-1}^{\frac{2}{2-q}\cdot\frac{1-\theta}{\theta}}+[\varrho'(A_k)]^{-{\frac{1-\theta}{\theta}}}\big\}.
\end{equation}
Substituting the above estimate into \eqref{eq:recur-rewritten} and using the notation $D_k=\sum_{i=k}^\infty\|\eta_{x_i}^*\|$, we obtain
\begin{equation}\label{}
\begin{aligned}
D_k &\le (D_{k-1}-D_k) + C_1(D_{k-1}-D_k)^{\frac{1-\theta}{\theta}} + b_1\delta_{k-1}^{\frac{2}{2-q}\cdot\frac{1-\theta}{\theta}}
  + b_2 [\varrho'(A_k)]^{-{\frac{1-\theta}{\theta}}} \\
  &~~~~ + \sum_{i=k}^\infty\delta_{i-1}^{\frac{2}{2-q}}
+\sum_{i=k}^\infty[\varrho'(A_i)]^{-1},
\end{aligned}
\end{equation}
where $C_1:=\frac{1}{M}c^{\frac{1}{\theta}}[(1-\theta)L_c]^{\frac{1-\theta}{\theta}}$,
$b_1:=\frac{1}{M}c^{\frac{1}{\theta}}[(1-\theta)L_tC_q]^{\frac{1-\theta}{\theta}}$, and
$b_2:=\frac{1}{M}c^{\frac{1}{\theta}}(1-\theta)^{\frac{1-\theta}{\theta}}$.
Recall that $r = \frac{2p}{2-q}$. We then derive
\begin{equation*}
\begin{aligned}
 \delta_{k-1}^{\frac{2}{2-q}\cdot\frac{1-\theta}{\theta}} &=\frac{1}{(k+1)^{r\cdot\frac{1-\theta}{\theta}}},\\
 \sum_{i=k}^\infty \delta_{i-1}^{\frac{2}{2-q}} &= \sum_{i=k}^\infty\frac{1}{(i+1)^r}
    \le \int_{k}^\infty \frac{1}{x^r}dx =\mathcal{O}\left(\frac{1}{(k+1)^{r-1}}\right), \\
 [\varrho'(A_k)]^{-{\frac{1-\theta}{\theta}}} &= \mathcal{O}(A_k^{1-\theta})
 = \mathcal{O}\left(\frac{1}{(k+1)^{(r-1)(1-\theta)}}\right), \\
 \sum_{i=k}^\infty[\varrho'(A_i)]^{-1} &= \frac{1}{c(1-\theta)}\sum_{i=k}^\infty A_i^\theta
 = \mathcal{O}\left(\sum_{i=k}^\infty \frac{1}{(i+1)^{(r-1)\theta}}\right) = \mathcal{O}\left(\frac{1}{(k+1)^{(r-1)\theta-1}}\right).
\end{aligned}
\end{equation*}
Since $r\cdot\frac{1-\theta}{\theta}>(r-1)(1-\theta)$ and $r-1>(r-1)(1-\theta)$,
there exists a constant $C_2>0$ such that
\begin{equation}\label{}
  D_k \le (D_{k-1}-D_k) + C_1(D_{k-1}-D_k)^{\frac{1-\theta}{\theta}}
  + \frac{C_2}{(k+1)^{\beta}},
\end{equation}
where the exponent $\beta=\min\{(r-1)(1-\theta),(r-1)\theta-1\}$.

Case 2: $\theta \in (0,\frac{1}{2})$.

We have $\frac{1-\theta}{\theta}> 1$.
By \eqref{eq:lem-estimate} and the inequality
$(a+b+c)^{\frac{1-\theta}{\theta}}\le 3^{\frac{1-\theta}{\theta}}(a^{\frac{1-\theta}{\theta}}
+ b^{\frac{1-\theta}{\theta}} +c^{\frac{1-\theta}{\theta}})$ which holds for all $a,b,c\ge0$, we obtain
\begin{equation}\label{}
  u_k \le c^{\frac{1}{\theta}}[3(1-\theta)]^{\frac{1-\theta}{\theta}}
    \big\{L_c^{\frac{1-\theta}{\theta}}\|\eta_{x_{k-1}}^*\|^{\frac{1-\theta}{\theta}} + (L_tC_q)^{\frac{1-\theta}{\theta}} \delta_{k-1}^{\frac{2}{2-q}\cdot\frac{1-\theta}{\theta}}+[\varrho'(A_k)]^{-{\frac{1-\theta}{\theta}}}\big\}.
\end{equation}
The rest of the proof proceeds along the same lines as in Case 1.
This completes the proof of \eqref{eq:true-recurrence}.

Then, by \eqref{eq:true-recurrence} and Lemma \ref{lem:rate}, we obtain
\begin{equation*}
D_k =
\begin{cases}
\mathcal{O}(k^{-\beta}) & \text{if }\ 0<\theta\le \frac{1}{2}, \\
\mathcal{O}(k^{-\min\{\frac{1-\theta}{2\theta-1},\,\beta\}}) & \text{if }\ \frac{1}{2}<\theta<1,
\end{cases}
\end{equation*}
where $\beta=\min\{(r-1)(1-\theta),(r-1)\theta-1\}$.
Recall that $r>\frac{\theta+1}{\theta}$. A direct comparison yields:
\begin{itemize}
  \item If $0<\theta\le\frac{1}{2}$, then
  $(r-1)(1-\theta)>(r-1)\theta-1$.
  \item If $\frac{1}{2}<\theta<1$ and $\frac{\theta+1}{\theta}<r<\frac{2\theta}{2\theta-1}$, then
  $\frac{1-\theta}{2\theta-1}> (r-1)(1-\theta)> (r-1)\theta-1$.
  \item If $\frac{1}{2}<\theta<1$ and $r=\frac{2\theta}{2\theta-1}$, then
  $(r-1)(1-\theta)=(r-1)\theta-1=\frac{1-\theta}{2\theta-1}$.
  \item If $\frac{1}{2}<\theta<1$ and $r>\frac{2\theta}{2\theta-1}$, then
  $(r-1)\theta-1>(r-1)(1-\theta)>\frac{1-\theta}{2\theta-1}$.
\end{itemize}
We thus have
\begin{equation*}
D_k =
\begin{cases}
\mathcal{O}(k^{-[(r-1)\theta-1]}) & \text{if}\ 0<\theta\le \frac{1}{2},  \\
\mathcal{O}(k^{-[(r-1)\theta-1]}) & \text{if}\ \frac{1}{2}<\theta<1\ \text{and}\ \frac{\theta+1}{\theta}<r\le \frac{2\theta}{2\theta-1}, \\
\mathcal{O}(k^{-\frac{1-\theta}{2\theta-1}}) & \text{if}\ \frac{1}{2}<\theta<1\ \text{and}\ r> \frac{2\theta}{2\theta-1}.
\end{cases}
\end{equation*}

Finally, applying Lemma \ref{lem:Huang-Retr-estimate} and the triangle inequality yields
\begin{equation*}
 \mathrm{dist}(x_k,\bar{x}) \le \sum_{i=k}^\infty \mathrm{dist}(x_i,x_{i+1}) \le \kappa_0 \sum_{i=k}^\infty\|\eta_{x_i}^*\|=\kappa_0 D_k.
\end{equation*}
This completes the proof.
\end{proof}

\section{Convergence analysis of the Riemannian proximal gradient method with strong inexact oracle}
\setcounter{equation}{0}

\subsection{Strong inexact oracle for Riemannian optimization}

In this section, we analyze the convergence rate of the sequence of function values to the optimal value.
By adding an additional condition to Definition \ref{de-inexact-oracle}, we propose the following definition of a strong inexact oracle for Riemannian optimization.

\begin{de}\label{de-strong-inexact-oracle}
A retraction-convex function $f:\mathcal{M}\rightarrow \br$ is said to be equipped with a strong $(\delta,L)$-oracle of degree $q\in[0,2)$
if for all $x\in \mathcal{M}$, there exists $g_{\delta,L,q}(x)\in T_x\mathcal{M}$ such that
\begin{equation}
 0\le f(R_x(\eta)) - (f(x) + \langle g_{\delta,L,q}(x),\eta\rangle)\le \frac{L}{2}\|\eta\|^2 + \delta\|\eta\|^q,~~~~\forall\,\eta\in T_x\mathcal{M}.
\end{equation}
\end{de}

\noindent
\textbf{Remark:} In Definition \ref{de-strong-inexact-oracle},
the first-order information $g_{\delta,L,q}(x)$ is a Riemannian subgradient of $f$ at $x$.

For any pair of sufficiently close points $x,y\in \mathcal{M}$ with $x\neq y$,
we aim to measure the difference between $g_{\delta,L,q}(x)$ and $g_{\delta,L,q}(y)$.
However, since these two vectors lie in different tangent spaces,
we instead measure the difference between $\mathcal{T}_{\xi_y}^\sharp g_{\delta,L,q}(x)$ and $g_{\delta,L,q}(y)$.

\begin{thm}
Suppose $f:\mathcal{M}\rightarrow \br$ is a retraction-convex function equipped with a strong $(\delta,L)$-oracle of degree $q\in[0,2)$.
For any pair of sufficiently close points $x,y\in \mathcal{M}$, we denote $\xi_y = R_y^{-1}(x)\in T_y\mathcal{M}$ and $\zeta_x = R_x^{-1}(y)\in T_x\mathcal{M}$.
Then for any parameter $\rho>0$, the following inequality holds:
\begin{equation}\label{eq:thm-property}
\begin{split}
  &~~~~\|\mathcal{T}_{\xi_y}^\sharp g_{\delta,L,q}(x) - g_{\delta,L,q}(y)\|^2 \\
  &\leq 2(L+q\rho)\langle  g_{\delta,L,q}(x),\mathcal{T}_{\xi_y}\xi_y + \zeta_x\rangle
  +(L+q\rho)^2 \|\zeta_x\|^2 + \frac{2(2-q)(L+q\rho)}{\rho^{\frac{q}{2-q}}} \delta^{\frac{2}{2-q}}.
\end{split}
\end{equation}
\end{thm}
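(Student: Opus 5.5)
The plan is to mimic the Euclidean co-coercivity argument for $L$-smooth convex functions, carried out in $T_y\mathcal{M}$. Write $g_x=g_{\delta,L,q}(x)$, $g_y=g_{\delta,L,q}(y)$, $L'=L+q\rho$ and $E=\frac{2-q}{2\rho^{\frac{q}{2-q}}}\delta^{\frac{2}{2-q}}$. By Definition \ref{de-strong-inexact-oracle} combined with \eqref{eq:Nabou} (exactly as in \eqref{eq:inexact-inequality}), the strong oracle gives, at any point $z$ and for all $\eta\in T_z\mathcal{M}$,
\[
f(z)+\langle g_z,\eta\rangle\le f(R_z(\eta))\le f(z)+\langle g_z,\eta\rangle+\tfrac{L'}{2}\|\eta\|^2+E .
\]
By the Remark after Definition \ref{de-strong-inexact-oracle}, $g_x\in\partial f(x)$.

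\textbf{Step 1 (lower bound through $R_y$).} I will apply retraction-convexity of $f$ (Definition \ref{de-retraction-convex}) at the base point $y$, with $\xi=\xi_y$ and $R_y(\xi_y)=x$. This requires a subgradient of $f\circ R_y$ at $\xi_y$. As in the proof of Theorem \ref{thm:IRPG-decrease}(2), the chain rule from \cite{Hosseini-2011} gives $\partial(f\circ R_y)(\xi_y)=\mathcal{T}_{\xi_y}^\sharp\partial f(x)$, so $\mathcal{T}_{\xi_y}^\sharp g_x$ is such a subgradient. Hence
\[
f(R_y(\eta))\ge f(x)+\langle \mathcal{T}_{\xi_y}^\sharp g_x,\eta-\xi_y\rangle\qquad\text{for all }\eta\in T_y\mathcal{M}.
\]
I expect this step to be the main obstacle. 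It hinges on the chain-rule identification, which uses $\mathcal{T}=DR$ and requires $x,y$ to be close enough that $\xi_y$ and $\zeta_x$ are well defined. It also hinges on reading the Remark as placing $g_x$ in the Riemannian subdifferential.

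\textbf{Step 2 (combine with the upper bound at $y$ and optimize).} I combine the Step 1 inequality with the upper oracle bound at $y$, which gives for all $\eta\in T_y\mathcal{M}$
\[
f(x)+\langle \mathcal{T}_{\xi_y}^\sharp g_x,\eta-\xi_y\rangle\le f(y)+\langle g_y,\eta\rangle+\tfrac{L'}{2}\|\eta\|^2+E .
\]
Setting $w:=\mathcal{T}_{\xi_y}^\sharp g_x-g_y$ and choosing the minimizing direction $\eta=w/L'$ yields
\[
\tfrac{1}{2L'}\|w\|^2\le f(y)-f(x)+\langle g_x,\mathcal{T}_{\xi_y}\xi_y\rangle+E ,
\]
where the adjoint has been moved back onto $\xi_y$.

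\textbf{Step 3 (eliminate the function values).} I apply the upper oracle bound at $x$ in the direction $\zeta_x$, using $R_x(\zeta_x)=y$:
\[
f(y)-f(x)\le\langle g_x,\zeta_x\rangle+\tfrac{L'}{2}\|\zeta_x\|^2+E .
\]
Substituting this into Step 2 and multiplying by $2L'$ gives
\[
\|w\|^2\le 2L'\langle g_x,\mathcal{T}_{\xi_y}\xi_y+\zeta_x\rangle+L'^2\|\zeta_x\|^2+4L'E .
\]
Since $4L'E=\frac{2(2-q)(L+q\rho)}{\rho^{\frac{q}{2-q}}}\delta^{\frac{2}{2-q}}$, this is exactly \eqref{eq:thm-property}.
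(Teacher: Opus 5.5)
Your proposal is correct and follows the paper's proof essentially step for step. Both arguments apply retraction-convexity at $y$ with the subgradient $\mathcal{T}_{\xi_y}^\sharp g_{\delta,L,q}(x)\in\partial(f\circ R_y)(\xi_y)$ from the chain rule, combine this with the upper oracle bound at $y$ evaluated at $\eta=(\mathcal{T}_{\xi_y}^\sharp g_{\delta,L,q}(x)-g_{\delta,L,q}(y))/(L+q\rho)$, and then remove $f(y)-f(x)$ using the upper bound at $x$ along $\zeta_x$, with your error bookkeeping $4(L+q\rho)E$ reproducing the stated constant exactly.
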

\begin{proof}
By a result from \cite{Hosseini-2011}, we have $\partial (f \circ R_y)(\xi_y) = [D R_y(\xi_y)]^\sharp \partial f(R_{y}(\xi_y))=\mathcal{T}_{\xi_y}^\sharp \partial f(x)$.
By Definition \ref{de-strong-inexact-oracle}, we have $g_{\delta,L,q}(x)\in \partial f(x)$.
Thus $\mathcal{T}_{\xi_y}^\sharp g_{\delta,L,q}(x)\in \partial (f \circ R_y)(\xi_y)$.

For any $\eta_y\in T_y \mathcal{M}$, the retraction-convexity of $f$ together with the identity $x=R_{y}(\xi_y)$ implies
\begin{equation*}
  f(R_y(\eta_y)) \ge  f(x) + \langle \mathcal{T}_{\xi_y}^\sharp g_{\delta,L,q}(x), \eta_y-\xi_y\rangle.
\end{equation*}
Applying \eqref{eq:inexact-inequality} to $f(R_y(\eta_y))$
and combining the resulting inequality with the above inequality, we obtain
\begin{equation*}
\begin{split}
 &~~~~ f(x) - \langle \mathcal{T}_{\xi_y}^\sharp g_{\delta,L,q}(x), \xi_y\rangle \\
 & \leq f(y) + \langle g_{\delta,L,q}(y)-\mathcal{T}_{\xi_y}^\sharp g_{\delta,L,q}(x), \eta_y \rangle
 + \frac{L + q\rho}{2} \|\eta_y\|^2 + \frac{2-q}{2\rho^{\frac{q}{2-q}}} \delta^{\frac{2}{2-q}}.
\end{split}
\end{equation*}
Substituting $\eta_y = -\frac{1}{L+q\rho}\big(g_{\delta,L,q}(y)-\mathcal{T}_{\xi_y}^\sharp g_{\delta,L,q}(x)\big)$ into the above inequality yields
\begin{equation*}
  f(x) - \langle \mathcal{T}_{\xi_y}^\sharp g_{\delta,L,q}(x), \xi_y\rangle
  \leq f(y)- \frac{1}{2(L+q\rho)}\|g_{\delta,L,q}(y)-\mathcal{T}_{\xi_y}^\sharp g_{\delta,L,q}(x)\|^2
  + \frac{2-q}{2\rho^{\frac{q}{2-q}}} \delta^{\frac{2}{2-q}}.
\end{equation*}
Finally, applying \eqref{eq:inexact-inequality} to $f(y)$ (recalling $y=R_x(\zeta_x)$)
and combining the resulting inequality with the above inequality, we arrive at inequality \eqref{eq:thm-property}.
\end{proof}

Furthermore, it can be shown that $\mathcal{T}_{\xi_y}\xi_y + \zeta_x$ is a higher-order infinitesimal with respect to $\|\xi_y\|$.
In particular, if the retraction $R$ reduces to the exponential map $\Exp$, then $\mathcal{T}_{\xi_y}\xi_y + \zeta_x\equiv 0$.
Consequently, we obtain the following corollary.

\begin{coro}
Suppose $f:\mathcal{M}\rightarrow \br$ is a retraction-convex function equipped with a strong $(\delta,L)$-oracle of degree $q\in [0,2)$.
For any pair of sufficiently close points $x,y\in \mathcal{M}$, we denote $\xi_y = R_y^{-1}(x)\in T_y\mathcal{M}$ and $\zeta_x = R_x^{-1}(y)\in T_x\mathcal{M}$. Then we have
\begin{equation}\label{eq:coro-property}
\|\mathcal{T}_{\xi_y}^\sharp g_{\delta,L,q}(x) - g_{\delta,L,q}(y)\|^2
= \mathcal{O}\left(\|\zeta_x\|^2+\delta^{\frac{2}{2-q}}\right) + o(\|g_{\delta,L,q}(x)\|\cdot\|\xi_y\|).
\end{equation}
In particular, if the retraction reduces to the exponential map, i.e., $R=\Exp$, we have
\begin{equation}\label{eq:coro-property-exp}
  \|\mathcal{T}_{\xi_y}^\sharp g_{\delta,L,q}(x) - g_{\delta,L,q}(y)\|^2
= \mathcal{O}\left(\|\zeta_x\|^2+\delta^{\frac{2}{2-q}}\right).
\end{equation}
\end{coro}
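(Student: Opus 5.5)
The corollary follows directly from the preceding theorem, inequality \eqref{eq:thm-property}; the only new ingredient is showing that $\mathcal{T}_{\xi_y}\xi_y+\zeta_x$ is $o(\|\xi_y\|)$. I fix any $\rho>0$, say $\rho=1$, so that $L+q\rho$ and $\frac{2(2-q)(L+q\rho)}{\rho^{q/(2-q)}}$ are constants. The last two terms on the right of \eqref{eq:thm-property} are then immediately $\mathcal{O}(\|\zeta_x\|^2+\delta^{\frac{2}{2-q}})$. For the first term I apply Cauchy--Schwarz:
\begin{equation*}
2(L+q\rho)\langle g_{\delta,L,q}(x),\mathcal{T}_{\xi_y}\xi_y+\zeta_x\rangle\le 2(L+q\rho)\|g_{\delta,L,q}(x)\|\cdot\|\mathcal{T}_{\xi_y}\xi_y+\zeta_x\|.
\end{equation*}
So it suffices to prove $\|\mathcal{T}_{\xi_y}\xi_y+\zeta_x\|=o(\|\xi_y\|)$ as $y\to x$.

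For this I work in a chart, or in an ambient embedding, and Taylor expand. Let $\gamma(s)=R_y(s\xi_y)$, so $\gamma(0)=y$, $\gamma(1)=x$, and $\gamma'(1)=DR_y(\xi_y)[\xi_y]=\mathcal{T}_{\xi_y}\xi_y\in T_x\mathcal{M}$. Since $DR_y(0)=\mathrm{id}$ and $R$ is smooth, $DR_y(\xi_y)=\mathrm{id}+\mathcal{O}(\|\xi_y\|)$ in coordinates, uniformly for $y$ in a compact neighborhood. Hence $\mathcal{T}_{\xi_y}\xi_y=\xi_y+\mathcal{O}(\|\xi_y\|^2)$, where the tangent vectors are compared in coordinates.

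Similarly, $R_x^{-1}$ is a local diffeomorphism with $D R_x^{-1}(x)=\mathrm{id}$. In coordinates, $\zeta_x=R_x^{-1}(y)=(y-x)+\mathcal{O}(\|y-x\|^2)$, while $x=R_y(\xi_y)=y+\xi_y+\mathcal{O}(\|\xi_y\|^2)$. Together these give $\zeta_x=-\xi_y+\mathcal{O}(\|\xi_y\|^2)$, and therefore $\mathcal{T}_{\xi_y}\xi_y+\zeta_x=\mathcal{O}(\|\xi_y\|^2)=o(\|\xi_y\|)$. The step needing the most care is that these coordinate expansions hold uniformly and that coordinate norms and Riemannian norms are equivalent on a compact neighborhood. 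I would handle this with the joint smoothness of $R$ on $T\mathcal{M}$ and the inverse function theorem applied to $(y,\xi)\mapsto(y,R_y(\xi))$, which also justifies that $\xi_y$ and $\zeta_x$ are well defined for sufficiently close $x,y$ and have comparable norms.

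For $R=\Exp$, both curves are geodesics. The geodesic from $y$ to $x$ has velocity $\mathcal{T}_{\xi_y}\xi_y=\frac{d}{ds}\Exp_y(s\xi_y)|_{s=1}$ at $x$. The reversed geodesic $s\mapsto\Exp_y((1-s)\xi_y)$ starts at $x$ with velocity $-\mathcal{T}_{\xi_y}\xi_y$ and reaches $y$ at $s=1$. By uniqueness of $\Exp_x^{-1}$ within the injectivity radius, $\zeta_x=\Exp_x^{-1}(y)=-\mathcal{T}_{\xi_y}\xi_y$. The first term of \eqref{eq:thm-property} therefore vanishes, which yields \eqref{eq:coro-property-exp}.
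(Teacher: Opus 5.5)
Your proof is correct. Its skeleton matches the paper's: apply \eqref{eq:thm-property} with a fixed $\rho$, bound the first term by $\|g_{\delta,L,q}(x)\|\cdot\|\mathcal{T}_{\xi_y}\xi_y+\zeta_x\|$, and for $R=\Exp$ use geodesic reversal. Your reversal step appeals explicitly to uniqueness of $\Exp_x^{-1}$ on a small ball, which is more explicit than the paper's ``it is clear that''. That step implicitly uses $\|\mathcal{T}_{\xi_y}\xi_y\|=\|\xi_y\|$ being small, which holds because geodesics have constant speed.

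Where you differ is in how you prove $\mathcal{T}_{\xi_y}\xi_y+\zeta_x=o(\|\xi_y\|)$. The paper does it intrinsically and in one step. It takes the transition map $F=R_x^{-1}\circ R_y:T_y\mathcal{M}\to T_x\mathcal{M}$, notes $F(\xi_y)=0_x$ and $F(0_y)=\zeta_x$, and gets $DF(\xi_y)=\mathcal{T}_{\xi_y}$ from the chain rule and $DR_x^{-1}(x)=\mathrm{id}$. A first-order Taylor expansion of $F$ about $\xi_y$, evaluated at $0_y$, then gives $\zeta_x=-\mathcal{T}_{\xi_y}\xi_y+o(\|\xi_y\|)$ without ever comparing vectors in different tangent spaces.

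You instead work in a chart (or embedding) and use $\xi_y$ as an intermediary, expanding $\mathcal{T}_{\xi_y}\xi_y=\xi_y+\mathcal{O}(\|\xi_y\|^2)$ and $\zeta_x=-\xi_y+\mathcal{O}(\|\xi_y\|^2)$ separately. The paper's route is shorter and coordinate-free. Yours costs some bookkeeping (coordinate representation of tangent spaces, norm equivalence), but it gives the explicit quadratic bound $\|\mathcal{T}_{\xi_y}\xi_y+\zeta_x\|=\mathcal{O}(\|\xi_y\|^2)$ and confronts uniformity in $(x,y)$ directly.

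That uniformity genuinely matters. In the paper's argument, the $o(\cdot)$ remainder belongs to a map $F$ that itself changes with $x$ and $y$, and it is evaluated at a point a distance $\|\xi_y\|$ away. Making this rigorous requires a uniform bound on the second derivative of $(x,y,\eta)\mapsto R_x^{-1}(R_y(\eta))$ near the diagonal. That is exactly the joint-smoothness and inverse-function-theorem argument you sketch, and it would likewise upgrade the paper's remainder to $\mathcal{O}(\|\xi_y\|^2)$.
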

\begin{proof}
We define $F:=R_x^{-1}\circ R_y$. We have
\begin{equation*}
\begin{split}
& F(\xi_y) = R_x^{-1}(R_y(\xi_y))=R_x^{-1}(x)=0_x,\\
& F(0_y) = R_x^{-1}(R_y(0_y))=R_x^{-1}(y)=\zeta_x.
\end{split}
\end{equation*}
Differentiating $F=R_x^{-1}\circ R_y$ at $\xi_y$, we obtain
\begin{equation*}
  DF(\xi_y)= DR_x^{-1}(R_y(\xi_y))\circ DR_y(\xi_y) = DR_x^{-1}(x)\circ\mathcal{T}_{\xi_y}.
\end{equation*}
Differentiating the identity $R_x\circ R_x^{-1}=\id$ at $x$, we obtain
\begin{equation*}
  DR_x(R_x^{-1}(x))\circ DR_x^{-1}(x)=\id.
\end{equation*}
By the definition of the retraction, we have $DR_x(R_x^{-1}(x))=DR_x(0_x)=\id$.
Therefore, $DR_x^{-1}(x)=\id$. Thus, we obtain
\begin{equation*}
  DF(\xi_y) = \mathcal{T}_{\xi_y}.
\end{equation*}

We expand $F$ in a Taylor series around the point $\xi_y$:
\begin{equation*}
  F(\eta) = F(\xi_y) + DF(\xi_y)[\eta-\xi_y] + o(\|\eta-\xi_y\|).
\end{equation*}
Substituting $\eta=0_y$ into the above equation, we obtain
\begin{equation*}
  \zeta_x = 0_x + \mathcal{T}_{\xi_y}(-\xi_y) + o(\|\xi_y\|).
\end{equation*}
Then, using \eqref{eq:thm-property}, we obtain \eqref{eq:coro-property}.

In what follows, we assume the retraction is the exponential map, i.e., $R=\Exp$.

We define $\gamma(t):=\Exp_x(t\zeta_x)$ for $t\in[0,1]$.
Then $\gamma(0)=x$ and $\gamma(1)=y$, which means $\gamma(t)$ is a geodesic joining $x$ to $y$.
Moreover, we have $\gamma'(0)=\zeta_x$.

Similarly, we define $\tilde{\gamma}(t):=\Exp_y(t\xi_y)$ for $t\in[0,1]$.
Then $\tilde{\gamma}(0)=y$ and $\tilde{\gamma}(1)=x$, which means $\tilde{\gamma}(t)$ is a geodesic joining $y$ to $x$.
Moreover, we have $\tilde{\gamma}'(1)=D\Exp_y(\xi_y)[\xi_y]=\mathcal{T}_{\xi_y}\xi_y$.

It is clear that $\gamma(t)$ and $\tilde{\gamma}(t)$ are the same geodesic traversed in opposite directions, i.e., $\tilde{\gamma}(t)=\gamma(1-t)$.
Thus, $\tilde{\gamma}'(1)=-\gamma'(0)$, i.e., $\mathcal{T}_{\xi_y}\xi_y=-\zeta_x$. Then, using \eqref{eq:thm-property}, we obtain \eqref{eq:coro-property-exp}.
\end{proof}

\subsection{Convergence rate of the sequence of function values to the optimal value}

It is well known that in the Euclidean setting, the proximal gradient method \eqref{eq:PGM} has an
$\mathcal{O}(1/k)$ convergence rate of the sequence of function values to the optimal value for convex problems \cite{Beck-bk2017}.
To establish the corresponding convergence rate in the Riemannian setting, we require
the following assumption.

\begin{assump}\label{assump:Huang}
\cite{Huang-2022} There exists a constant $\kappa>0$ such that for any $x,y,z\in \Omega$, there holds
\begin{equation}\label{eq:assump-Huang}
  \left|\|R_x^{-1}(z)-R_x^{-1}(y)\|^2 - \|R_y^{-1}(z)\|^2\right| \le \kappa \|R_x^{-1}(y)\|^2,
\end{equation}
where $\Omega$ is a set containing the iterates $\{x_k\}$.
\end{assump}

The following theorem establishes the convergence rate of the sequence of function values for Algorithm \ref{alg:algorithm-IRPG}.
The proof mainly follows that in \cite[Theorem 2]{Huang-2022}. We omit the details here.

\begin{thm}\label{thm:function-convergence}
Suppose that $f$ is a retraction-convex function equipped with a strong $(\delta_k, L_k)$-oracle of degree $q \in [0,2)$ at each iteration $k$,
and that $h$ is a retraction-convex function.
Let $\{x_k\}$ be the sequence generated by Algorithm \ref{alg:algorithm-IRPG}.
Suppose further that Assumption \ref{assump:Huang} holds.
If there exist constants $\rho > 0$ and $t_{\min}>0$ such that $t_{\min} \leq t_k \leq \frac{1}{L_k + q\rho}$ for all $k\ge 0$,
then for all $n\geq 1$, we have
\begin{equation}\label{eq:function-convergence}
\begin{aligned}
  &~~~\min_{1\leq k \leq n}\{F(x_k)-F(x^*)\} \\
  &\le\frac{1}{2nt_{\min}}\|R_{x_0}^{-1}(x^*)\|^2 + \frac{\kappa}{n}[F(x_0) - F(x^*)] + \frac{(\kappa+1)(2-q)}{2n\rho^{\frac{q}{2-q}}} \sum_{k=0}^{n-1}\delta_k^{\frac{2}{2-q}},
\end{aligned}
\end{equation}
where $x^*$ denotes a global minimizer of $F$ on $\mathcal{M}$.
\end{thm}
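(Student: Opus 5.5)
The plan is to follow the argument of \cite[Theorem 2]{Huang-2022}, replacing the exact gradient by the oracle vector $g_k:=g_{\delta_k,L_k,q}(x_k)$. Write $\varepsilon_k:=\frac{2-q}{2\rho^{\frac{q}{2-q}}}\delta_k^{\frac{2}{2-q}}$. The argument has three steps: a one-step inequality comparing $F(x_{k+1})$ with $F(x^*)$, a telescoping step, and a bound on the accumulated Assumption \ref{assump:Huang} error.

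\textbf{Step 1 (one-step comparison with $x^*$).} Fix $k$ and set $\eta_k:=\eta^*_{x_k}$ and $\xi_k:=R_{x_k}^{-1}(x^*)$.
\begin{itemize}
\item Use \eqref{eq:inexact-inequality} at $x_k$ with $t_k\le 1/(L_k+q\rho)$ to get
\[
f(x_{k+1})\le f(x_k)+\langle g_k,\eta_k\rangle+\tfrac{1}{2t_k}\|\eta_k\|^2+\varepsilon_k .
\]
\item The lower half of Definition \ref{de-strong-inexact-oracle} gives $f(x_k)+\langle g_k,\xi_k\rangle\le f(R_{x_k}(\xi_k))=f(x^*)$; this inequality is the reason the strong oracle is needed.
\item The optimality condition \eqref{eq:eq1} supplies $\zeta_{x_{k+1}}\in\partial h(x_{k+1})$ with $\mathcal{T}^\sharp_{\eta_k}\zeta_{x_{k+1}}=-g_k-\eta_k/t_k$. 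Retraction-convexity of $h$ at $x_k$ with base point $\eta_k$ then gives
\[
h(x^*)\ge h(x_{k+1})+\langle -g_k-\eta_k/t_k,\ \xi_k-\eta_k\rangle .
\]
\end{itemize}
Adding the three inequalities cancels the $g_k$ terms and yields
\[
F(x_{k+1})-F(x^*)\le \tfrac{1}{t_k}\langle \eta_k,\xi_k-\eta_k\rangle+\tfrac{1}{2t_k}\|\eta_k\|^2+\varepsilon_k
=\tfrac{1}{2t_k}\bigl(\|\xi_k\|^2-\|\xi_k-\eta_k\|^2\bigr)+\varepsilon_k .
\]

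\textbf{Step 2 (change of base point and telescoping).} Apply Assumption \ref{assump:Huang} with $x=x_k$, $y=x_{k+1}$, $z=x^*$:
\[
\|\xi_k-\eta_k\|^2\ge \|\xi_{k+1}\|^2-\kappa\|\eta_k\|^2 .
\]
Hence
\[
F(x_{k+1})-F(x^*)\le \tfrac{1}{2t_k}\bigl(\|\xi_k\|^2-\|\xi_{k+1}\|^2\bigr)+\tfrac{\kappa}{2t_k}\|\eta_k\|^2+\varepsilon_k .
\]
Since $t_k$ varies, multiply through by $t_k/t_{\min}\ge1$ before summing, so the coefficient of the distance difference becomes the constant $1/(2t_{\min})$. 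The left side stays $\ge\min_j\{F(x_j)-F(x^*)\}\ge0$ times a factor $\ge1$. (Using $t_k\le 1/(q\rho)$ keeps the $\varepsilon_k$ terms under control.) Summing $k=0,\dots,n-1$ and dividing by $n$ gives the first term $\frac{1}{2nt_{\min}}\|R_{x_0}^{-1}(x^*)\|^2$.

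\textbf{Step 3 (the $\kappa$ term).} By \eqref{eq:decrease-2} of Theorem \ref{thm:IRPG-decrease}(2),
\[
\tfrac{1}{2t_k}\|\eta_k\|^2\le F(x_k)-F(x_{k+1})+\varepsilon_k .
\]
Summing telescopes to
\[
\sum_{k=0}^{n-1}\tfrac{\kappa}{2t_k}\|\eta_k\|^2\le \kappa\bigl[F(x_0)-F(x_n)\bigr]+\kappa\sum_{k=0}^{n-1}\varepsilon_k\le \kappa\bigl[F(x_0)-F(x^*)\bigr]+\kappa\sum_{k=0}^{n-1}\varepsilon_k .
\]
Together with the $\varepsilon_k$ from Step 1, this produces the error term with the factor $(\kappa+1)$ in \eqref{eq:function-convergence}. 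Finally bound the average of $F(x_{k+1})-F(x^*)$ below by $\min_{1\le k\le n}$.

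\textbf{Main obstacle.} The delicate point is the bookkeeping with variable $t_k$ in Step 2. Rescaling by $t_k/t_{\min}$ also rescales the $\kappa\|\eta_k\|^2$ and $\varepsilon_k$ terms, and those must still match the constants claimed in \eqref{eq:function-convergence}. If this fails, I would instead keep the factor $\frac{1}{2t_k}$ and telescope using monotonicity of $1/t_k$, or accept a harmless constant change.
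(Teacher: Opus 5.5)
The genuine gap is the variable-$t_k$ bookkeeping you flagged in Step 2, and none of your fallbacks closes it. Your Steps 1 and 3 are correct. For a constant step $t_k\equiv t$ (so $t_{\min}=t$), your argument closes without any rescaling: sum $F(x_{k+1})-F(x^*)\le\frac{1}{2t}(\|\xi_k\|^2-\|\xi_{k+1}\|^2)+\frac{\kappa}{2t}\|\eta_k\|^2+\varepsilon_k$ over $k=0,\dots,n-1$, insert Step 3, and you get \eqref{eq:function-convergence} exactly. This is the setting of \cite[Theorem 2]{Huang-2022}, to which the paper defers instead of giving a proof.

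With variable steps, multiplying by $t_k/t_{\min}$ turns the $\kappa$-term into $\frac{t_k}{t_{\min}}\cdot\frac{\kappa}{2t_k}\|\eta_k\|^2$. Step 3, together with $F(x_k)-F(x_{k+1})+\varepsilon_k\ge0$, then only gives $\kappa\frac{t_{\max}}{t_{\min}}\bigl[F(x_0)-F(x^*)+\sum_k\varepsilon_k\bigr]$ with $t_{\max}:=\max_{k<n}t_k$. The error term likewise becomes $\sum_k\frac{t_k}{t_{\min}}\varepsilon_k$. The fallbacks do not help:
\begin{itemize}
\item The bound $t_k\le 1/(q\rho)$ is unavailable when $q=0$, where only $t_k\le 1/L_k$ holds. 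For $q>0$ it produces a factor $1/(q\rho t_{\min})$, not the stated constant.
\item Monotonicity of $t_k$ is not a hypothesis.
\end{itemize}
What your steps honestly give for variable steps is obtained by multiplying the one-step inequality by $t_k$ and summing:
\[
\min_{1\le k\le n}\{F(x_k)-F(x^*)\}\le\frac{\tfrac12\|R_{x_0}^{-1}(x^*)\|^2+\kappa t_{\max}[F(x_0)-F(x^*)]+\sum_{k=0}^{n-1}(t_k+\kappa t_{\max})\varepsilon_k}{\sum_{k=0}^{n-1}t_k}.
\]

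This cannot be sharpened to \eqref{eq:function-convergence}, because the stated bound is false for non-constant steps. Take the following setup:
\begin{itemize}
\item $\mathcal{M}=\br$ with $R_x(\eta)=x+\eta$, so Assumption \ref{assump:Huang} holds for every $\kappa>0$;
\item $f(x)=|x|$, $h\equiv0$, $q=0$, and $x_0=x^*=0$.
\end{itemize}
For every $L>0$ and every subgradient $g$, $0\le|x+\eta|-|x|-g\eta\le2|\eta|\le\frac{L}{2}\eta^2+\frac{2}{L}$. Hence $f$ has a strong $(2/L,L)$-oracle of degree $0$. Now choose the parameters:
\begin{itemize}
\item at $k=0$, use $(\delta_0,L_0,t_0)=(2,1,1)$ with $g_0=1$;
\item for $k\ge1$, use $(\delta_k,L_k,t_k)=(2s,1/s,s)$ with $g_k=-1$.
\end{itemize}
All hypotheses of the theorem hold with $t_{\min}=s$. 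The iterates are $x_k=-1+(k-1)s$ for $1\le k\le n$, so the left-hand side of \eqref{eq:function-convergence} equals $1-(n-1)s$. The first two terms of the right-hand side vanish since $x_0=x^*$, leaving $\frac{1+\kappa}{n}\bigl(2+2(n-1)s\bigr)$. Take $s=1/(2n)$ and $n>6(1+\kappa)$. Then the left side exceeds $1/2$, while the right side is below $3(1+\kappa)/n<1/2$.

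The mechanism is this. A long early step with a large oracle error throws the iterate far from $x^*$, and tiny later steps cannot undo it. Meanwhile, the unweighted average $\frac1n\sum_k\varepsilon_k$ dilutes that early error. So your proof, like the theorem itself, is valid only for constant step sizes, or in the weighted form above.
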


In particular, when the oracle errors decay as $\delta_k = \frac{1}{(k+2)^p}$ for some $p>0$,
we have the following corollary.

\begin{coro}
Suppose the conditions of Theorem \ref{thm:function-convergence} hold.
Assume further that $\delta_k = \frac{1}{(k+2)^p}$ for some $p>0$. Then
\begin{equation*}
\min_{1\leq k \leq n}\{F(x_k)-F(x^*)\} =
\begin{cases}
\mathcal{O}(n^{-r}) & \text{if }\ 0<r<1, \\
\mathcal{O}(n^{-1}\ln n) & \text{if }\ r=1, \\
\mathcal{O}(n^{-1}) & \text{if }\ r> 1,
\end{cases}
\end{equation*}
where $r=\frac{2p}{2-q}$.
\end{coro}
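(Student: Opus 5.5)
The corollary follows by substituting $\delta_k=(k+2)^{-p}$ into the bound \eqref{eq:function-convergence} of Theorem \ref{thm:function-convergence} and estimating the partial sums. The first two terms on the right-hand side, $\frac{1}{2nt_{\min}}\|R_{x_0}^{-1}(x^*)\|^2$ and $\frac{\kappa}{n}[F(x_0)-F(x^*)]$, are constants divided by $n$, so they are $\mathcal{O}(n^{-1})$ in every case. The rate is therefore governed by the third term, which is a constant multiple of $\frac{1}{n}S_n$, where
\begin{equation*}
S_n:=\sum_{k=0}^{n-1}\delta_k^{\frac{2}{2-q}}=\sum_{k=0}^{n-1}\frac{1}{(k+2)^{r}},\qquad r=\frac{2p}{2-q}.
\end{equation*}

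Since $x\mapsto x^{-r}$ is decreasing, each summand satisfies $(k+2)^{-r}\le\int_{k+1}^{k+2}x^{-r}\,dx$, so I would bound $S_n$ by the integral comparison
\begin{equation*}
S_n\le \int_1^{n+1}x^{-r}\,dx.
\end{equation*}
I would then split into three cases.
\begin{itemize}
  \item If $0<r<1$, the integral equals $\frac{(n+1)^{1-r}-1}{1-r}=\mathcal{O}(n^{1-r})$. Hence $\frac{1}{n}S_n=\mathcal{O}(n^{-r})$, which dominates the $\mathcal{O}(n^{-1})$ terms.
  \item If $r=1$, the integral equals $\ln(n+1)=\mathcal{O}(\ln n)$ for $n\ge2$, giving $\mathcal{O}(n^{-1}\ln n)$.
  \item If $r>1$, the integral is at most $\frac{1}{r-1}$, so $S_n$ is uniformly bounded and the whole right-hand side is $\mathcal{O}(n^{-1})$.
\end{itemize}

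Combining the three cases with the $\mathcal{O}(n^{-1})$ bound on the first two terms gives the stated rates. The argument is routine; the only point needing care is to confirm in each case which term dominates. When $r<1$ we have $n^{-r}\ge n^{-1}$, and when $r=1$ we have $n^{-1}\ln n\ge n^{-1}$ for $n\ge3$, so the smaller terms are absorbed into the stated bound.
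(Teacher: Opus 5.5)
Your proposal is correct and takes essentially the same approach as the paper. You bound $\sum_{k=0}^{n-1}(k+2)^{-r}\le\int_1^{n+1}x^{-r}\,dx$, split into the cases $0<r<1$, $r=1$, $r>1$, and absorb the two $\mathcal{O}(n^{-1})$ terms of \eqref{eq:function-convergence} into the resulting rate.
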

\begin{proof}
If $0<r<1$, we have
\begin{equation*}
  \sum_{k=0}^{n-1}\delta_k^{\frac{2}{2-q}} = \sum_{k=0}^{n-1}\frac{1}{(k+2)^r}\le \int_{1}^{n+1}\frac{1}{x^r}dx
  \le \frac{(n+1)^{1-r}}{1-r}\le \frac{2^{1-r}}{1-r}n^{1-r}.
\end{equation*}
If $r=1$, we have
\begin{equation*}
  \sum_{k=0}^{n-1}\delta_k^{\frac{2}{2-q}} = \sum_{k=0}^{n-1}\frac{1}{k+2}\le \int_{1}^{n+1}\frac{1}{x}dx
  = \ln(n+1)\le 2\ln n,~~\forall\,n\geq 2.
\end{equation*}
If $r>1$, we have
\begin{equation*}
  \sum_{k=0}^{n-1}\delta_k^{\frac{2}{2-q}} = \sum_{k=0}^{n-1}\frac{1}{(k+2)^r} \leq \sum_{k=0}^{\infty}\frac{1}{(k+2)^r}<\infty.
\end{equation*}
Combining the three cases above with \eqref{eq:function-convergence} yields the desired results. 
\end{proof}

\section{Conclusion}
\setcounter{equation}{0}

In this paper, we define an inexact first-order oracle for Riemannian optimization
and conduct a rigorous convergence analysis for the Riemannian proximal gradient method equipped with this oracle, referred to as RPG-IO.
Under mild conditions on the oracle errors, we establish the global convergence of RPG-IO.
Under the additional assumption of the Riemannian KL property, we prove that the full sequence of iterates converges to a single stationary point, and derive explicit convergence rates when the KL exponent is specified. Under a strong inexact oracle, we further derive the convergence rate of the sequence of function values to the optimal value.

Overall, our results extend the classical proximal gradient convergence theory to the Riemannian setting with inexact first-order information, and provide a rigorous theoretical foundation for the design and implementation of practical Riemannian optimization algorithms in the inexact oracle regime.
A natural direction for future research is to extend the RPG-IO framework
to stochastic settings and to incorporate line search stepsize strategies.

\section*{Acknowledgement}

This research was partially supported by National Natural Science Foundation
of China (Grant Nos. 12171488, 62573298), Guangdong Provincial Key Laboratory (Grant No. 2023B1212060076),
Guangdong Basic and Applied Basic Research Foundation (Grant No. 2024A1515010988).

\section*{References}

\bibliography{UTF8-refs}

\begin{thebibliography}{10}
\expandafter\ifx\csname url\endcsname\relax
  \def\url#1{\texttt{#1}}\fi
\expandafter\ifx\csname urlprefix\endcsname\relax\def\urlprefix{URL }\fi
\expandafter\ifx\csname href\endcsname\relax
  \def\href#1#2{#2} \def\path#1{#1}\fi

\bibitem{Absil-bk2009}
P.~Absil, R.~Mahony, R.~Sepulchre, Optimization Algorithms on Matrix Manifolds,
  Princeton University Press, Princeton, 2009.

\bibitem{Boumal-bk2023}
N.~Boumal, An Introduction to Optimization on Smooth Manifolds, Cambridge
  University Press, Cambridge, 2023.

\bibitem{Sato-bk2021}
H.~Sato, Riemannian Optimization and Its Applications, SpringerBriefs in
  Control, Automation and Robotics, Springer, Cham, Switzerland, 2021.

\bibitem{Wen-2020}
J.~Hu, X.~Liu, Z.-W. Wen, Y.-X. Yuan, A brief introduction to manifold
  optimization, J. Oper. Res. Soc. China 8~(2) (2020) 199--248.

\bibitem{Ma-2020}
S.~Chen, S.~Ma, A.~M.-C. So, T.~Zhang, Proximal gradient method for nonsmooth
  optimization over the stiefel manifold, SIAM J. Optim. 30~(1) (2020)
  210--239.

\bibitem{Beck-2009}
A.~Beck, M.~Teboulle, A fast iterative shrinkage-thresholding algorithm for
  linear inverse problems, SIAM J. Imaging Sci. 2~(1) (2009) 183–--202.

\bibitem{Beck-bk2017}
A.~Beck, First-Order Methods in Optimization, MOS-SIAM Series on Optimization,
  Society for Industrial and Applied Mathematics, Philadelphia, PA, 2017.

\bibitem{Huang-2022-2}
W.~Huang, K.~Wei, An extension of fast iterative shrinkage-thresholding
  algorithm to {R}iemannian optimization for sparse principal component
  analysis, Numer. Linear Algebra Appl. 29 (2022) e2409.

\bibitem{Huang-2022}
W.~Huang, K.~Wei, Riemannian proximal gradient methods, Math. Program. 194
  (2022) 371--413.

\bibitem{Choi-2025}
W.~Choi, C.~Chun, Y.~M. Jung, S.~Yun, On the linear convergence rate of
  {R}iemannian proximal gradient method, Optim. Lett. 19 (2025) 667--687.

\bibitem{Beck-2023}
A.~Beck, I.~Rosset, A dynamic smoothing technique for a class of nonsmooth
  optimization problems on manifolds, SIAM J. Optim. 33~(3) (2023) 1473--1493.

\bibitem{Bai-2023}
F.~Bai, A.~Bartoli, The proxy step-size technique for regularized optimization
  on the sphere manifold, IEEE Trans. Pattern Anal. Mach. Intell. 45~(5) (2023)
  6428--6444.

\bibitem{Nesterov-2014}
O.~Devolder, F.~Glineur, Y.~Nesterov, First-order methods of smooth convex
  optimization with inexact oracle, Math. Program. 146 (2014) 37--75.

\bibitem{Nabou-2025}
Y.~Nabou, F.~Glineur, I.~Necoara, Proximal gradient methods with inexact oracle
  of degree $q$ for composite optimization, Optimization Letters 19 (2025)
  285–--306.

\bibitem{Huang-2023}
W.~Huang, K.~Wei, An inexact {R}iemannian proximal gradient method, Comput.
  Optim. Appl. 85 (2023) 1--32.

\bibitem{Deng-2023}
K.~Deng, Z.~Peng, A manifold inexact augmented {L}agrangian method for
  nonsmooth optimization on {R}iemannian submanifolds in {E}uclidean space, IMA
  J. Numer. Anal. 43 (2023) 1653--1684.

\bibitem{Deng-2025-2}
K.~Deng, J.~Hu, J.~Wu, Z.~Wen, Oracle complexities of augmented lagrangian
  methods for nonsmooth composite optimization on a compact submanifold, Math.
  Oper. Res. (2025) 1--27.

\bibitem{Deng-2025}
J.~Zhou, K.~Deng, H.~Wang, Z.~Peng, Inexact {R}iemannian gradient descent
  method for nonconvex optimization with strong convergence, J. Sci. Comput.
  103 (2025).

\bibitem{Bolte-2009}
H.~Attouch, J.~Bolte, On the convergence of the proximal algorithm for
  nonsmooth functions involving analytic features, Math. Program. 116 (2009)
  5--16.

\bibitem{Li-2023}
X.~Li, A.~Milzarek, J.~Qiu, Convergence of random reshuffling under the
  {K}urdyka--{${\L}$}ojasiewicz inequality, SIAM J. Opt. 33~(2) (2023)
  1092--1120.

\bibitem{Lee-book2018}
J.~M. Lee, Introduction to Riemannian Manifolds, 2nd Edition, Vol. 176 of
  Graduate Texts in Mathematics, Springer, Switzerland, 2018.

\bibitem{Carmo-book1992}
M.~P. do~Carmo, Riemannian Geometry, Mathematics: Theory \& Applications,
  Birkh{\"a}user Boston, Inc., Boston, MA, 1992.

\bibitem{Clarke-book1990}
F.~H. Clarke, Optimization and Nonsmooth Analysis, Vol.~5 of Classics in
  Applied Mathematics, Society for Industrial and Applied Mathematics,
  Philadelphia, PA, 1990.

\bibitem{Hosseini-2018}
S.~Hosseini, W.~Huang, R.~Yousefpour, Line search algorithms for locally
  lipschitz functions on {R}iemannian manifolds, SIAM J. Optim. 28~(1) (2018)
  596–--619.

\bibitem{Hosseini-2011}
S.~Hosseini, M.~Pouryayevali, Generalized gradients and characterization of
  epi-{L}ipschitz sets in {R}iemannian manifolds, Nonlinear Anal. Theory
  Methods Appl. 74~(12) (2011) 3884--3895.

\bibitem{Yang-2014}
W.~H. Yang, L.-H. Zhang, R.~Song, Optimality conditions for the nonlinear
  programming problems on {R}iemannian manifolds, Pac. J. Optim. 10~(2) (2014)
  415--434.

\bibitem{Boumal-2018}
N.~Boumal, P.-A. Absil, C.~Cartis, Global rates of convergence for nonconvex
  optimization on manifolds, IMA J. Numer. Anal. 39~(1) (2018) 1--33.

\bibitem{Bolte-2007}
J.~Bolte, A.~Daniilidis, A.~Lewis, The {{\L}}ojasiewicz inequality for
  nonsmooth subanalytic functions with applications to subgradient dynamical
  systems, SIAM J. Optim. 17~(4) (2007) 1205--1223.

\bibitem{Bolte-2013}
H.~Attouch, J.~Bolte, B.~F. Svaiter, Convergence of descent methods for
  semi-algebraic and tame problems: proximal algorithms, forward-backward
  splitting, and regularized {Gauss-Seidel} methods, Math. Program. 137 (2013)
  91--129.

\bibitem{Bolte-2014}
J.~Bolte, S.~Sabach, M.~Teboulle, Proximal alternating linearized minimization
  for nonconvex and nonsmooth problems, Math. Program. 146~(1--2) (2014)
  459--494.

\end{thebibliography}
\bibliographystyle{elsarticle-num}

\end{document}